\theoremstyle{plain}
\newtheorem{thm}{\protect\theoremname}[section]
  \theoremstyle{plain}
  \newtheorem{cor}[thm]{\protect\corollaryname}
  \theoremstyle{definition}
  \newtheorem{defn}[thm]{\protect\definitionname}
  \theoremstyle{remark}
  \newtheorem{rem}[thm]{\protect\remarkname}
  \theoremstyle{definition}
  \newtheorem{example}[thm]{\protect\examplename}
  \theoremstyle{plain}
  \newtheorem{prop}[thm]{\protect\propositionname}
  \theoremstyle{plain}
  \newtheorem{lem}[thm]{\protect\lemmaname}
  \theoremstyle{plain}
  \newtheorem*{cor*}{\protect\corollaryname}
  \providecommand{\corollaryname}{Corollary}
  \providecommand{\definitionname}{Definition}
  \providecommand{\examplename}{Example}
  \providecommand{\lemmaname}{Lemma}
  \providecommand{\propositionname}{Proposition}
  \providecommand{\remarkname}{Remark}
\providecommand{\theoremname}{Theorem}
\begin{document}

\title{A topological constraint for monotone Lagrangians in hypersurfaces
of Kähler manifolds}

\author{Simon Schatz\\
\href{mailto:sschatz@unistra.fr}{sschatz@unistra.fr}}

\maketitle

\begin{abstract}
In this paper we establish a topological constraint for monotone Lagrangian
embeddings in certain complex hypersurfaces of integral Kähler manifolds.
As an application, we prove that it is impossible to embed a connected
sum of $S^{1}\times S^{2k}$s in $\mathbb{C}P^{2k+1}$ as a monotone
Lagrangian.
\end{abstract}

\section{Introduction and main result}

This paper is concerned with a topological constraint on certain monotone
Lagrangian submanifolds in symplectic hypersurfaces of Kähler manifolds,
with the particular example of $\mathbb{C}P^{n}$ in mind. It is known
that the existence of Lagrangian embeddings $L\hookrightarrow M$
imposes topological constraints on $L$; one may think for instance
of Gromov's celebrated theorem regarding the impossibility for a Lagrangian
submanifold in $\mathbb{C}^{n}$ to be simply connected~\cite{Gromov},
and more recently S. Nemirovski proved that Klein bottles do not admit
a Lagrangian embedding in $\mathbb{C}^{2n}$~\cite{LagKleinBottles}.
We will consider, as in the two aforementioned results, a closed and
connected Lagrangian. In our case, we find that under the right geometrical
circumstances a $K\left(\pi,1\right)$ monotone, orientable Lagrangian
must have some non-trivial element $g\in\pi$ whose centraliser is
of finite index.

This finding actually echoes and generalises in the case of monotone
Lagrangians a claim by Fukaya in~\cite{Fuk}:
\begin{quotation}
Let $L$ be a $K\left(\pi,1\right)$, spin, Lagrangian submanifold
of $\mathbb{C}P^{n}$. Then there is some $A\in\pi_{2}\left(\mathbb{C}P^{n},L\right)$
of Maslov index $2$ such that the centraliser of $\partial A$ is
of finite index in $\pi_{1}\left(L\right)$.
\end{quotation}
We will generalise this statement to the framework developed by P.
Biran in~\cite{Biran:2001fk}, of which $\mathbb{C}P^{n}\subset\mathbb{C}P^{n+1}$
is an example.

\newpage{}
\begin{thm}
\label{prop:Main-result}Let $L^{n}$ be a monotone, compact, orientable
and $K\left(\pi,1\right)$ Lagrangian submanifold of some symplectic
manifold $\left(\Sigma^{2n},\omega\right)$.

Assume that $\Sigma$ is a complex hypersurface of a closed, integral
Kähler manifold $\left(M^{2n+2},\omega_{M}\right)$, that $\left[\Sigma\right]\in H_{2n}\left(M;\mathbb{Z}\right)$
is Poincaré-dual to a multiple of $\left[\omega_{M}\right]\in H^{2}\left(M,\mathbb{Z}\right)$,
that $W=M\setminus\Sigma$ is a subcritical Weinstein domain, and
that the first Chern number of $\Sigma$ is at least $2$.

Then the Maslov number $N_{L}$ of $L$ is $2$ and there exists some
non-trivial $g\in\pi_{1}\left(L\right)$ such that its centraliser
is of finite index.
\end{thm}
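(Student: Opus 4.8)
The plan is to transport the problem into the ambient manifold $M$ by means of Biran's circle-bundle construction, reduce the desired conclusion to the vanishing of a \emph{lifted} Floer homology, and then extract the topological statement from the algebra of the group ring $\mathbb{Z}/2[\pi]$ with $\pi=\pi_{1}(L)$. First I would use the hypotheses on the triple $(\Sigma,W\subset M)$ to place myself in Biran's setting: a neighbourhood of $\Sigma$ is a standard symplectic disc bundle whose boundary is the prequantisation circle bundle of $(\Sigma,\omega)$, and $W=M\setminus\Sigma$ retracts onto an isotropic skeleton of dimension at most $n$. Restricting this circle bundle over the monotone Lagrangian $L\subset\Sigma$ produces Biran's \emph{Lagrangian circle bundle} $\hat{L}\subset M$, an $S^{1}$-bundle over $L$. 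The condition that $[\Sigma]$ be Poincaré-dual to a multiple of $[\omega_{M}]$ makes $\hat{L}$ a monotone Lagrangian of $M$, while the hypothesis that the first Chern number of $\Sigma$ is at least $2$ is what controls the Maslov indices, through the relation between a disc in $(M,\hat{L})$, its projection to $(\Sigma,L)$, and its winding in the fibre. Since $L$ is aspherical and the bundle is non-trivial, $\hat{L}$ is again a $K(\pi',1)$.

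The one genuinely geometric input comes next: because $\hat{L}$ is a compact subset of the subcritical Weinstein domain $W$, the Biran--Cieliebak displacement theorem provides a Hamiltonian isotopy displacing $\hat{L}$ from itself inside $W$, hence inside $M$, so that $HF(\hat{L},\hat{L};M)=0$. I would then show that this vanishing persists for the version of Floer homology lifted to the covering $\hat{L}\to L$ (equivalently, to the universal cover of $\hat{L}$), which I compute through the pearl complex. Projecting pearly trajectories to $\Sigma$ identifies this lifted homology with a group-ring chain complex $C_{*}(L;\mathbb{Z}/2[\pi])$ carrying a quantum-deformed differential $d=d_{0}+d_{1}+\cdots$, where $d_{0}$ is the equivariant Morse differential and $d_{\nu}$ counts pearly configurations of total Maslov index $\nu N_{L}$. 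Asphericity now does the structural work: the universal cover $\widetilde{L}$ is contractible, so the classical homology $H_{*}(C_{*},d_{0})$ is the trivial module $\mathbb{Z}/2$ concentrated in degree $0$, represented by the fundamental class of $\widetilde{L}$.

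The remaining analysis is algebraic, and it is where the real work lies. Acyclicity of the full deformed complex is a strong constraint on a complex whose classical homology is a single copy of $\mathbb{Z}/2$: the fundamental class must be cancelled, and a careful inspection of the lifted pearl differential shows that the only contributions capable of cancelling it have Maslov index exactly $2$. Comparing the Maslov numbers of $\hat{L}$ and $L$ via the Chern-number hypothesis, this forces $N_{L}=2$, so that only $d_{1}$ is relevant. Its effect on the fundamental class is governed by the disc-counting element $\mathfrak{m}_{0}=\sum_{\gamma}n_{\gamma}\,\gamma\in\mathbb{Z}/2[\pi]$, namely the count of Maslov-$2$ discs with boundary on $L$ through a prescribed point, organised by the free homotopy classes $\gamma$ of their boundaries; acyclicity then forces $\mathfrak{m}_{0}$ to satisfy a non-trivial identity in $\mathbb{Z}/2[\pi]$ incompatible with $\pi$ being, say, a torsion-free group obeying Kaplansky's zero-divisor conjecture.

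Finally I would read off the centraliser condition. Gromov compactness together with monotonicity guarantees that each free homotopy class contributes only finitely many discs, so $\mathfrak{m}_{0}$ is a genuine, $\pi$-equivariantly structured element whose support is a union of conjugacy classes. The algebraic relation imposed by acyclicity cannot be met if every conjugacy class occurring in $\mathfrak{m}_{0}$ is infinite, forcing at least one non-trivial class $g=\partial A$ appearing in $\mathfrak{m}_{0}$ to have a \emph{finite} conjugacy class; since a conjugacy class is finite precisely when the centraliser of its elements has finite index, this $g$ is the required element of $\pi_{1}(L)$. I expect the technical heart, and the main obstacle, to be exactly this last passage: converting the vanishing of the lifted pearl complex into an exploitable identity in $\mathbb{Z}/2[\pi]$, and combining the zero-divisor structure of the group ring with the finiteness of disc counts to conclude that some boundary class must have finite conjugacy class.
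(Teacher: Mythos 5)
Your overall architecture coincides with the paper's: pass to Biran's circle bundle $\hat{L}=\Gamma_{L}\subset W$, use subcriticality to displace it, feed the displaceability into Damian's lifted Floer theory (which the paper invokes as Theorem~\ref{Th=0000E9or=0000E8me Damian gamma} and Proposition~\ref{prop:Mihai Gamma reciproque} rather than re-proving through the pearl complex), and conclude via the equivalence between finiteness of a conjugacy class and finite index of the centraliser. But there is a genuine gap at the one step you treat as automatic: the claim that ``projecting pearly trajectories to $\Sigma$'' identifies the lifted complex of $\hat{L}$ with a complex over $\mathbb{Z}/2\left[\pi_{1}\left(L\right)\right]$. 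Nothing guarantees that a holomorphic disc with boundary on $\hat{L}$ stays in the region where the projection to $\Sigma$ exists: $\pi$ is defined only on the complement of the isotropic skeleton $\Delta$, and it is holomorphic only for the special fibred almost complex structures, which are not generic. Precisely this is the paper's main technical content: a neck-stretching argument (Proposition~\ref{etirement cou}, following Bourgeois--Eliashberg--Hofer and Biran--Khanevsky) showing that for sufficiently stretched structures every Maslov-$2$ disc through a given point of $\Gamma_{L}$ stays inside the disc bundle $E_{r_{0}+\epsilon}$, with $N_{\Sigma}\geqslant2$ used to kill the components of the limit building; a hand-made transversality argument for the fibred, hence non-generic, structure, obtained by splitting the linearised operator into horizontal and vertical parts; and a unique-lifting statement with a Stokes/monotonicity computation pinning down the boundary class of the lift. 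None of these appear in your proposal.

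This cannot be waved away, because without the disc correspondence Damian's machinery only produces a non-trivial $\tilde{g}\in\pi_{1}\left(\hat{L}\right)$ with odd disc count and finite-index centraliser \emph{in} $\pi_{1}\left(\hat{L}\right)$. Since $\hat{L}\to L$ is a principal circle bundle (not a covering map, so your parenthetical ``equivalently'' is incorrect, and the two lifted theories it conflates are different), the fibre class is central in $\pi_{1}\left(\hat{L}\right)$; a priori $\tilde{g}$ could be a power of it, in which case $\pi_{*}\tilde{g}=e$ and you learn nothing whatsoever about $\pi_{1}\left(L\right)$. It is exactly the projection/lifting correspondence that converts the odd count upstairs into an odd count of Maslov-$2$ discs on $\left(\Sigma,L\right)$ with boundary $g=\pi_{*}\tilde{g}$, after which asphericity of $L$ together with $N_{\Sigma}\geqslant2$ forces $g\neq e$ (a Maslov-$2$ disc with contractible boundary would come from a sphere of Chern number $1$). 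Your closing paragraph (finite support of the counts by Gromov compactness, conjugation invariance, hence a finite conjugacy class) is the correct reading of Proposition~\ref{prop:Mihai Gamma reciproque}, but it must be applied to boundary classes in $\pi_{1}\left(L\right)$, which is precisely what you have not produced; the appeal to Kaplansky's zero-divisor conjecture is a red herring, as torsion-freeness plays no role in either argument.
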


By ever-so-slightly extending the main result, we also obtain, under
the same hypothesis regarding $\Sigma$:
\begin{thm}
\label{thm:main result version CPn}Let $L^{n}$ be a monotone, compact,
orientable Lagrangian submanifold of $\left(\Sigma^{2n},\omega\right)$,
such that all the odd-numbered cohomology groups of its universal
cover $\tilde{L}$ vanish.

If $H^{2}\left(\Sigma,\mathbb{Z}\right)$ is generated by $\left[\omega\right]$
or $H^{2}\left(L,\mathbb{Z}\right)=0$, then the Maslov number $N_{L}$
of $L$ is $2$ and there exists some non-trivial $g\in\pi_{1}\left(L\right)$
such that its centraliser is of finite index.
\end{thm}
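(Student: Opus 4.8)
The plan is to run the proof of Theorem~\ref{prop:Main-result} essentially verbatim, isolating the places where the hypothesis that $L$ be $K(\pi,1)$ was used and checking that each is covered by the weaker hypotheses at hand. Asphericity entered there for two distinct purposes: first, through the contractibility of the universal cover $\tilde L$, which forces $H_*(\tilde L)$ to be concentrated in degree $0$; and second, through $\pi_2(L)=0$, which was used to pin down the monotone relation between symplectic area and Maslov index on the relevant disk classes and the well-definedness of the lifted (group-ring) Floer package. I would replace the first by the hypothesis $H^{\mathrm{odd}}(\tilde L)=0$ and the second by the hypothesis that $H^2(\Sigma;\mathbb{Z})$ is generated by $[\omega]$ or that $H^2(L;\mathbb{Z})=0$, keeping the shared hypotheses on $\Sigma$ (in particular that the first Chern number of $\Sigma$ is at least $2$ and that $W=M\setminus\Sigma$ is subcritical Weinstein) untouched.

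For the Maslov-number statement, the engine is a parity argument that needs only the vanishing of the odd homology of $\tilde L$. Since $L$ is orientable its Maslov class is even, so $N_L$ is even; consequently the differential $d_r$ of the spectral sequence computing the lifted Floer homology from $H_*(\tilde L)$ raises the homological grading by $rN_L-1$, an odd number. Under the hypothesis $H^{\mathrm{odd}}(\tilde L)=0$ the $E_1$-page is concentrated in even degrees, so every $d_r$ vanishes identically and the spectral sequence degenerates. Thus if $L$ carried no Maslov-$2$ disk (in particular if $N_L\neq 2$) the lifted self-Floer complex would be a genuine complex with homology $H_*(\tilde L)\neq 0$, contradicting the vanishing of the lifted Floer homology supplied, exactly as in the proof of Theorem~\ref{prop:Main-result}, by the displaceability of Biran's Lagrangian lift of $L$ in $M$ coming from the subcritical Weinstein structure of $W$. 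Hence $L$ bounds a Maslov-$2$ disk and $N_L=2$; this step used $K(\pi,1)$ only through $H^{\mathrm{odd}}(\tilde L)=0$, which is now a hypothesis.

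With $N_L=2$ in hand, the disk-counting obstruction $m_0=\sum_g n_g\,g\in\mathbb{Z}[\pi_1(L)]$ produced by the argument of Theorem~\ref{prop:Main-result}, counting Maslov-$2$ disks weighted by the homotopy class of their boundary, is a \emph{central} element of the group ring (the chain-level identity $D^2=m_0\cdot\mathrm{id}$ for the self-Floer differential with group-ring coefficients) and is finitely supported by Gromov compactness. It is non-zero, since otherwise $D^2=0$ and the degeneration above would again contradict the vanishing furnished by displaceability. Moreover, the hypothesis that the first Chern number of $\Sigma$ is at least $2$ forces every Maslov-$2$ disk to have non-contractible boundary: a null-homotopic boundary could be capped off by a disk in $L$ to a sphere of Chern number $1$ in $\Sigma$, which is excluded. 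Hence $m_0$ is non-zero and supported entirely on non-trivial elements; a non-zero finitely supported central element of $\mathbb{Z}[\pi_1(L)]$ is a finite integral combination of class sums of finite conjugacy classes, so some non-trivial $g=[\partial A]$ in its support has a finite conjugacy class, equivalently a centraliser of finite index, which is the desired conclusion.

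The main obstacle is the second substitution: confirming that the $H^2$-hypothesis genuinely replaces $\pi_2(L)=0$ wherever the latter was invoked. When $\pi_2(L)$ fails to vanish one must rule out that spherical classes in $L$ (or, after lifting, in Biran's lift) disturb the monotone relation between area and Maslov index or the coherent orientations underlying the group-ring complex; the point is that if $H^2(\Sigma;\mathbb{Z})=\mathbb{Z}\langle[\omega]\rangle$, or if $H^2(L;\mathbb{Z})=0$ (so that $H_2(L)$ is torsion), the relevant second-homology classes are controlled by the symplectic form, and the lifted Floer package, together with its spectral sequence and the centrality of $m_0$, is defined with the same formal properties as in the $K(\pi,1)$ case. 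Verifying this compatibility, rather than the formal algebra of the two previous paragraphs, is where the real work lies.
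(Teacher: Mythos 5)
There is a genuine gap, and it sits exactly where you yourself locate ``the real work.'' The Floer-theoretic engine in this paper is never run on $L\subset\Sigma$: it is run on the circle bundle $\Gamma_{L}\subset W$, because the vanishing of lifted Floer homology comes from Hamiltonian displaceability, which holds for $\Gamma_{L}$ (as $W$ is subcritical) and certainly not for $L$ inside the closed manifold $\Sigma$. Your spectral-sequence argument contradicts that vanishing against $H_{*}\left(\tilde{L}\right)\neq0$; the correct comparison is against $H_{*}\left(\tilde{\Gamma}_{L}\right)$, so the hypothesis that must be verified is the vanishing of the odd-numbered cohomology of $\tilde{\Gamma}_{L}$. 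Once $L$ is no longer assumed to be a $K\left(\pi,1\right)$, this is not automatic: for a non-trivial circle bundle it can fail even when $H^{\mathrm{odd}}\left(\tilde{L}\right)=0$ (think of the Hopf bundle over $S^{2}$, whose total space has universal cover $S^{3}$ with $H^{3}\neq0$). Your proposal never establishes this vanishing for $\tilde{\Gamma}_{L}$.

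Moreover, you misassign the role of the $H^{2}$ hypothesis: it is not there to protect the monotonicity relation or coherent orientations from spherical classes. That is a non-issue, since monotonicity of $L$ is a standing hypothesis and transfers to $\Gamma_{L}$ by Proposition~\ref{passage de maslov}; Damian's machinery requires no vanishing of $\pi_{2}$. The sole purpose of the $H^{2}$ hypothesis in the paper is to prove that $\Gamma_{L}\to L$ is a \emph{trivial} circle bundle, via the Euler class: by naturality $e_{\Gamma_{L}}=\iota^{*}e_{P}$, where $e_{P}$ is collinear to $\left[\omega_{\Sigma}\right]$, and $\iota^{*}\left[\omega_{\Sigma}\right]=0$ because $L$ is Lagrangian (alternatively, $e_{\Gamma_{L}}=0$ trivially when $H^{2}\left(L,\mathbb{Z}\right)=0$). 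Triviality gives $\tilde{\Gamma}_{L}\cong\tilde{L}\times\mathbb{R}$, hence $H^{\mathrm{odd}}\left(\tilde{\Gamma}_{L}\right)=0$; then Theorem~\ref{Th=0000E9or=0000E8me Damian gamma} and Proposition~\ref{prop:Mihai Gamma reciproque} apply to $\Gamma_{L}$, and the neck-stretching disc correspondence of Section~\ref{sec:Proof} (which uses no topological assumption on $L$) carries the conclusion down to $L$. The compatibility question you defer at the end is therefore not the one that needs answering; the missing step is the Euler-class argument and the resulting identification of $\tilde{\Gamma}_{L}$.
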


\begin{cor}
\label{cor:corollaire prinicpal}Let $\left(L_{i}\right)_{i\in I}$
be a finite collection of compact, orientable, $2k+1$-di\-men\-sio\-nal
manifolds such that all the odd-numbered cohomology groups of each
universal cover $\tilde{L}_{i}$ vanish. Assume that either:
\begin{enumerate}
\item $\forall i\in I,H^{2}\left(L_{i},\mathbb{Z}\right)=0$ with $k>1$,
or
\item $H^{2}\left(\Sigma,\mathbb{Z}\right)$ is generated by $\left[\omega_{\Sigma}\right]$.
\end{enumerate}
Then there is no Lagrangian monotone embedding of the connected sum
$\sharp_{i\in I}L_{i}$ in $\Sigma$.
\end{cor}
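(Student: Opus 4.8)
The plan is to derive a contradiction from the existence of such a monotone Lagrangian embedding by applying Theorem~\ref{thm:main result version CPn} and then analysing the group theory of the fundamental group of a connected sum. First I would verify that the connected sum $L=\sharp_{i\in I}L_i$ satisfies the hypotheses of Theorem~\ref{thm:main result version CPn}. The manifold $L$ is compact, orientable and of odd dimension $2k+1=n$. The cohomological condition on the universal cover requires more care: I would need to check that the vanishing of odd-numbered cohomology groups of each $\tilde L_i$ passes to the universal cover $\tilde L$ of the connected sum. In each of the two cases the topological side hypothesis of Theorem~\ref{thm:main result version CPn} ($H^2(L,\mathbb{Z})=0$, respectively $H^2(\Sigma,\mathbb{Z})$ generated by $[\omega_\Sigma]$) must hold for $L$; case~(2) is automatic since it is a condition on $\Sigma$ alone, whereas in case~(1) I must propagate $H^2(L_i,\mathbb{Z})=0$ to $H^2(L,\mathbb{Z})=0$ using the connected-sum cohomology (here the restriction $k>1$ ensures the degree-$2$ part is unaffected by the sphere-gluing, i.e.\ $2\neq n$).

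Once the hypotheses are in place, Theorem~\ref{thm:main result version CPn} hands me a non-trivial element $g\in\pi_1(L)$ whose centraliser $C(g)$ has finite index in $\pi_1(L)$. The heart of the argument is then purely group-theoretic: I claim that the fundamental group of a connected sum of manifolds of dimension at least $3$ is the free product $\pi_1(L)\cong \ast_{i\in I}\pi_1(L_i)$ (by van Kampen, since the connecting spheres are simply connected in dimension $\geq 2$). A free product of non-trivial groups has the property that every non-trivial element has an \emph{infinite-index} centraliser, unless the free product is trivial or reduces to a single factor. The key step is therefore to show that in a non-trivial free product $G=\ast_{i}G_i$, no non-trivial element can have a finite-index centraliser: this follows because a free product (with more than one non-trivial factor) contains a non-abelian free subgroup and, more directly, centralisers of non-trivial elements in free products are either cyclic or conjugate into a single factor, so they cannot have finite index when at least two factors are non-trivial.

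The contradiction then forces the free product to be trivial, i.e.\ each $\pi_1(L_i)$ must be trivial, or the index set $I$ is a singleton and the "connected sum" is not a genuine sum. However, I must be careful here: the hypotheses do not a priori assume the $\pi_1(L_i)$ are non-trivial, so the clean statement is that a \emph{genuine} connected sum (at least two summands with at least one having non-trivial fundamental group, or producing a non-trivial free product) cannot be monotone Lagrangian. I would phrase the argument so as to rule out the monotone embedding precisely when $\pi_1(L)$ is a non-trivial free product with the centraliser property failing. The main obstacle I anticipate is the verification of the universal-cover cohomology hypothesis for the connected sum: the universal cover $\tilde L$ of a connected sum with infinite fundamental group is a non-compact manifold obtained by gluing infinitely many punctured copies of the $\tilde L_i$ along their lifted connecting spheres, and computing its cohomology (to confirm the odd-degree vanishing) requires a Mayer--Vietoris or a direct analysis of this infinite connected-sum-like structure. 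Handling this carefully — rather than the group theory, which is standard — is where I expect the real work to lie.
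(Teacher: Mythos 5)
There is a genuine gap, and it sits exactly at the step you call ``standard'' group theory. Your claim that in a non-trivial free product no non-trivial element can have a finite-index centraliser is false: in $\mathbb{Z}/2\mathbb{Z}*\mathbb{Z}/2\mathbb{Z}=\left\langle u,v\mid u^{2},v^{2}\right\rangle$ (the infinite dihedral group) the centraliser of $uv$ is the infinite cyclic subgroup $\left\langle uv\right\rangle$, which has index $2$. The paper itself records precisely this counterexample in a remark. Your ``more direct'' argument does not save this: centralisers in free products are indeed cyclic or conjugate into a factor, but cyclic does not imply infinite index, as the same example shows. The correct statement (the paper's Lemma~\ref{lem:free product}) requires \emph{at least one factor to be infinite and another non-trivial}, and your proposal never establishes that hypothesis.

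The missing idea that makes the hypothesis verifiable — and which is the actual content of the paper's short proof — is this: each $\pi_{1}\left(L_{i}\right)$ is forced to be \emph{infinite}. Indeed, if $\pi_{1}\left(L_{i}\right)$ were finite, the universal cover $\tilde{L}_{i}$ would be a compact orientable $\left(2k+1\right)$-manifold, hence $H^{2k+1}\left(\tilde{L}_{i};\mathbb{Z}\right)\neq0$; but $2k+1$ is odd, so this cohomology group vanishes by hypothesis — a contradiction. This observation does double duty: it supplies the ``one infinite factor'' needed for the lemma (so that, for a genuine connected sum with $\left|I\right|\geqslant2$, van Kampen gives a free product of infinite groups), and it eliminates the hedge in your last paragraph, since simply connected or finite-$\pi_{1}$ summands are excluded by the hypotheses rather than needing to be assumed away. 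As written, your argument only proves a weaker statement than the corollary. The remaining ingredients of your outline (van Kampen, Mayer--Vietoris for the odd-degree cohomology of $\tilde{L}$ and for propagating $H^{2}\left(L_{i},\mathbb{Z}\right)=0$ when $k>1$, then contradiction with Theorem~\ref{thm:main result version CPn}) do match the paper's route, but the Mayer--Vietoris step you defer as ``the real work'' is dispatched in one sentence in the paper, whereas the step you dismiss as standard is where the proof actually lives.
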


\begin{cor}
\label{cor:application S1xS2k CP2k+1}Let $p>1$, $k>0$. There is
no monotone embedding of $\left(S^{1}\times S^{2k}\right)^{\sharp p}$
in $\mathbb{C}P^{2k+1}$.
\end{cor}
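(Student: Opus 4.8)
The plan is to obtain Corollary~\ref{cor:application S1xS2k CP2k+1} as a direct instance of Corollary~\ref{cor:corollaire prinicpal}, so that the whole argument reduces to exhibiting the right Biran decomposition and then verifying the hypotheses for the summands $L_i = S^1 \times S^{2k}$.

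First I would fix the ambient geometry. Take $M = \mathbb{C}P^{2k+2}$ with its Fubini--Study form and let $\Sigma = \mathbb{C}P^{2k+1}$ be a hyperplane. Then $[\Sigma]$ is Poincar\'e dual to the hyperplane class, hence to a multiple of $[\omega_M]$; the complement $W = M \setminus \Sigma$ is the affine chart $\mathbb{C}^{2k+2}$, whose Liouville skeleton is a single point and is therefore a subcritical Weinstein domain; and the minimal Chern number of $\Sigma = \mathbb{C}P^{2k+1}$ equals $2k+2 \geq 4$, which is at least $2$ for $k>0$. Thus $\Sigma = \mathbb{C}P^{2k+1}$ satisfies all the standing hypotheses on $\Sigma$ required by Corollary~\ref{cor:corollaire prinicpal} (inherited from Theorem~\ref{prop:Main-result} via Theorem~\ref{thm:main result version CPn}).

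Next I would check the topological conditions on the summands. Each $L_i = S^1 \times S^{2k}$ is a compact, orientable $(2k+1)$-manifold, and its universal cover is $\mathbb{R} \times S^{2k} \simeq S^{2k}$, whose cohomology sits only in the even degrees $0$ and $2k$ (here $k>0$); hence all odd-numbered cohomology groups of $\tilde{L}_i$ vanish. To close the argument I would invoke hypothesis~(2) of the corollary rather than~(1): since $H^2(\mathbb{C}P^{2k+1};\mathbb{Z}) \cong \mathbb{Z}$ is generated by $[\omega_\Sigma]$, condition~(2) holds for every $k>0$. Applying Corollary~\ref{cor:corollaire prinicpal} to the family $(L_i)_{1 \le i \le p}$ then rules out any monotone Lagrangian embedding of $(S^1 \times S^{2k})^{\sharp p}$, as claimed.

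I do not expect a genuine analytic obstacle here, since every step is a verification against an already-established result; the two points that merit care are confirming that the hyperplane complement is honestly a subcritical Weinstein domain (so that Biran's framework truly applies) and deliberately routing the proof through condition~(2) instead of condition~(1). The latter matters because at $k=1$ one has $H^2(S^1 \times S^2;\mathbb{Z}) \cong \mathbb{Z} \neq 0$, so condition~(1) fails and only condition~(2) survives; for $k>1$ both conditions hold, but using~(2) treats all $k>0$ uniformly.
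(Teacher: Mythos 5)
Your proof is correct and follows essentially the same route as the paper: the paper obtains Corollary~\ref{cor:application S1xS2k CP2k+1} as ``a straightforward application'' of Corollary~\ref{cor:corollaire prinicpal}, with $\Sigma=\mathbb{C}P^{2k+1}$ realised as a hyperplane in $M=\mathbb{C}P^{2k+2}$ and the hypotheses discharged through condition~(2), exactly as you do. You have merely written out the verifications (subcriticality of the affine complement, minimal Chern number $2k+2\geqslant 2$, vanishing odd cohomology of $\mathbb{R}\times S^{2k}$, and the observation that condition~(1) would fail at $k=1$) that the paper leaves implicit.
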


\paragraph{Outline of the proofs}

Let $L$ be a closed, connected, $K\left(\pi,1\right)$ Lagrangian
in a symplectic manifold $\left(\Sigma,\omega_{\Sigma}\right)$ as
above. In the section~\ref{sec:The-symplectic-model}, borrowing
from P. Biran~\cite{Biran:2001fk}, we will see how we can view most
of $M$ as a complex line bundle over $\Sigma$. In the total space
of this bundle we can associate to $L$ a circle bundle $\Gamma_{L}\rightarrow L$
by considering the points above $L$ of a given modulus. The resulting
$\Gamma_{L}$ is a compact, orientable $K\left(\pi',1\right)$, and
a monotone Lagrangian submanifold of $W=M\setminus\Sigma$. Since
$W$ is assumed to be subcritical, $\Gamma_{L}$ is also displaceable
by an Hamiltonian isotopy.

In the section~\ref{sec:Mihais-work} we will recall some results
obtained in~\cite{Dam-10,Mihai2012} by M. Damian on precisely this
type of Lagrangian. Namely, the Maslov number $N_{\Gamma_{L}}$ of
$\Gamma_{L}$ is $2$ and there exists some non-trivial $g\in\pi_{1}\left(\Gamma_{L}\right)$
such that its centraliser is of finite index. Furthermore, this element
is the boundary of some pseudo-holomorphic disc with Maslov index
$2$.

Then, in section~\ref{sec:Proof}, we obtain a one-to-one correspondence
between the pseudo-holomorphic discs on $\left(\Sigma,L\right)$ and
those on $\left(W,\Gamma_{L}\right)$ with corresponding boundary
and Maslov index $2$. To that end we use the techniques developed
by Biran \& Khanevsky~\cite{BK2011} to project those discs in $M$
down to $\Sigma$ in a holomorphic way, involving some ''stretching
the neck''. This implies the theorem~\ref{prop:Main-result} on
$L$.

This result actually has some interesting consequences, especially
when translated to some looser condition on $L$. These corollaries
are presented in section~\ref{sec:Applications}. Of particular note
is the situation in $\mathbb{C}P^{n}$: as its second cohomology group
in generated by the symplectic form, the Euler class of $\Gamma_{L}$
must vanish. It is thus a trivial $S^{1}$ bundle over $L$. Since
the universal cover of $\Gamma_{L}$ retracts to the one of $L$,
those two covers have the same cohomology. In particular, if odd-numbered
cohomology groups of the universal cover $\tilde{L}$ vanish the same
is true for $\Gamma_{L}$. This condition on $L$ is sufficient to
apply the results from \cite{Mihai2012} and therefore, in $\mathbb{C}P^{n}$,
sufficient to obtain the same conclusion as in~\ref{prop:Main-result}.
In some examples, such as $\left(S^{1}\times S^{2k}\right)^{\sharp p}$
in $\mathbb{C}P^{2k+1}$, it is incompatible with the structure of
the fundamental group, making it impossible to embed as a monotone
Lagrangian.

\paragraph{Acknowledgements}

I want to thank Mihai Damian for his incredible patience as he guided
me through the writing of this article. I also wish to thank Michael
Khanevsky and Romain Ponchon for the time they took to discuss various
parts of it. Thanks to Florian Delage for his thoughtful proofing.

\section{\label{sec:The-symplectic-model}The symplectic model}

\subsection{\label{sub:Standard-symplectic-bundle}Standard symplectic bundle}

Let $\left(\Sigma,\tau\right)$ be a closed integral symplectic manifold,
i.e. such that $\left[\tau\right]\in H^{2}\left(\Sigma,\mathbb{Z}\right)$
is well defined. We now present the standard symplectic bundle, introduced
by Paul Biran in~\cite{Biran:2001fk}. This will be our model to
understand most of $M$ as a symplectic manifold.

Let $\mathcal{N}$ be a complex line bundle over $\Sigma$ with $\left[\tau\right]$
as its first Chern class. On $\mathcal{N}$ choose an Hermitian metric
$\left|.\right|$, an Hermitian connection $\nabla$ and denote by
$H^{\nabla}$ the associated horizontal subbundle. The transgression
1-form $\alpha$ is then defined out of the zero section by:
\begin{eqnarray*}
\alpha_{\big|H^{\nabla}}=0 & \alpha_{\left(p,u\right)}\left(u\right)=0 & \alpha_{\left(p,u\right)}(iu)=\frac{1}{2\pi}
\end{eqnarray*}
where $\left(p,u\right)\in\mathcal{N}$. Then $\text{d}\alpha=-\pi^{*}\tau$
with $\mathcal{N}\overset{\pi}{\longrightarrow}\Sigma$. Designating
by $r$ the distance to the zero section induced by the metric, the
standard symplectic form on $\mathcal{N}\setminus0_{\Sigma}$ is then:
\[
\omega_{std}=-\text{d}\left(e^{-r^{2}}\alpha\right)=e^{-r^{2}}\pi^{*}\tau+2re^{-r^{2}}\text{d}r\wedge\alpha
\]
Remark that in the vertical direction, $r\text{d}r\wedge\text{d}\alpha$
is the usual symplectic form on $\mathbb{C}$. Hence each summand
can be extended to the whole total space $\mathcal{N}$ and is symplectic
on the horizontal and vertical subbundle respectively.

The standard symplectic disc bundle is given by:
\[
E_{r}=\left\{ \left(p,u\right)\in\mathcal{N},\left|u\right|\leqslant r\right\} 
\]
endowed with the restriction of $\omega_{std}$.
\begin{defn}
An isotropic CW-complex in $M$ is some subset that is homeomorphic
to a CW-complex in such a way that the interior of each cell is isotropically
embedded in $M$.\end{defn}
\begin{thm}
\label{(Biran)Model}(Theorem 1.A in \cite{Biran:2001fk}) Let $\left(M^{2n+2},\omega\right)$
be a closed integral Kähler manifold and let $\Sigma\subset M$ be
a complex hypersurface whose homology class $\left[\Sigma\right]\in H_{2n}\left(M;\mathbb{Z}\right)$
is Poincaré-dual to a multiple $k\left[\omega\right]$ of $\left[\omega\right]\in H^{2}\left(M,\mathbb{Z}\right)$.
Then, there exists an isotropic CW-complex $\Delta\subset M$ whose
complement \textemdash{}~the open dense subset $\left(M\setminus\Delta,\omega\right)$~\textemdash{}
is symplectomorphic to the standard symplectic bundle $\left(\mathcal{N},\frac{1}{k}\omega_{std}\right)$
over $\Sigma$ pertaining to $\tau=k\omega_{\big|\Sigma}$.

In other words, there exists an embedding $F:\left(\mathcal{N},\frac{1}{k}\omega_{std}\right)\hookrightarrow\left(M,\omega\right)$
such that
\begin{itemize}
\item The zero section is isomorphic to $\Sigma$.
\item $\Delta=M\setminus F(\mathcal{N})$ is an isotropic CW-complex.
\item $\forall r>0$, $\left(M\setminus F\left(\overset{\circ}{E_{r}}\right),\omega\right)$
is a Weinstein domain.
\end{itemize}
\end{thm}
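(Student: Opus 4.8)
The plan is to realise $\Sigma$ as an ample divisor and to read the decomposition off the Liouville flow of its complement. Since $M$ is integral Kähler and $\left[\Sigma\right]$ is Poincaré-dual to $k\left[\omega\right]$, I would first produce a holomorphic line bundle $\mathcal{L}\to M$ with $c_{1}(\mathcal{L})=k\left[\omega\right]$ together with a holomorphic section $s$ whose zero divisor is exactly $\Sigma$; the existence of $s$ comes from $\Sigma$ being an effective divisor in the class $k\left[\omega\right]$. Because two Hermitian metrics on $\mathcal{L}$ differ by $dd^{c}$ of a function, the $\partial\bar\partial$-lemma lets me choose a metric $h$ whose Chern curvature represents $k\omega$ \emph{exactly}, not merely cohomologically. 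The point of this normalisation is the Poincaré--Lelong computation $\frac{i}{2\pi}\partial\bar\partial\log|s|_{h}^{2}=-k\omega$ on $M\setminus\Sigma$, which exhibits $\varphi:=-\log|s|_{h}^{2}$ as a plurisubharmonic exhaustion of $M\setminus\Sigma$ blowing up along $\Sigma$, with $\omega$ equal to $dd^{c}\varphi$ up to a positive constant. This is precisely what makes $M\setminus\Sigma$ Weinstein.

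Next I would study the Liouville vector field $X$ defined by $\iota_{X}\omega=d^{c}\varphi$ and its flow. The subset $\Delta\subset M\setminus\Sigma$ of points whose orbits stay bounded (the stable skeleton of the flow) is the candidate isotropic CW-complex: after perturbing $\varphi$ to be Morse, $\Delta$ is the union of the stable manifolds, each of dimension at most the middle dimension $n+1$, and the Liouville form $d^{c}\varphi$ vanishes along it, which is exactly the statement that each cell is isotropically embedded. The complementary fact is that the flow is complete in the outward direction and sweeps $M\setminus\Delta$ onto a collar of $\Sigma$, so that $M\setminus\Delta$ retracts to $\Sigma$ and fibres over it.

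The heart of the matter is to identify $(M\setminus\Delta,\omega)$ symplectically with $(\mathcal{N},\frac{1}{k}\omega_{std})$. Near $\Sigma$ the section $s$ trivialises the normal bundle and identifies it with $\mathcal{N}=\mathcal{L}|_{\Sigma}$, whose first Chern class is $k\omega|_{\Sigma}=\tau$, matching Biran's model; here $|s|_{h}$ becomes the radial coordinate $r$ and the connection of $h$ restricts to the Hermitian connection $\nabla$ entering the transgression form $\alpha$. A direct computation should show that in these coordinates $\frac{1}{k}\omega$ agrees to first order with $\frac{1}{k}\omega_{std}=-\frac{1}{k}\,d(e^{-r^{2}}\alpha)$ in a neighbourhood of the zero section. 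Then a Moser argument upgrades this to an honest symplectomorphism on a small disc bundle $E_{\varepsilon}$, and --- this is the step I expect to be the \emph{main obstacle} --- the Liouville flow is used to propagate this local symplectomorphism outward over all of $M\setminus\Delta$, which simultaneously gives the three conclusions: that the zero section maps to $\Sigma$, that $\Delta=M\setminus F(\mathcal{N})$ is the isotropic complex found above, and that each $M\setminus F(\overset{\circ}{E_{r}})$ is a sublevel set of $\varphi$ and hence a Weinstein domain.

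The difficulty concentrates in this global extension: one must check that the outward Liouville flow carries the standard model's radial coordinate to $\varphi$ in a form-preserving way for \emph{all} radii, not just near $\Sigma$, controlling the accumulated distortion of $\omega$ along long flow lines and ensuring completeness so that the collar fills the entire complement of the skeleton. Verifying that $\Delta$ is a genuine CW-complex with isotropically embedded open cells, rather than a more singular set, is the secondary technical point; in the projective, real-analytic setting this follows from describing $\Delta$ through the Morse stratification of $\varphi$, using that stable manifolds of a Liouville flow are automatically isotropic.
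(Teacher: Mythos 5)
The paper offers no proof of this statement at all: it is quoted verbatim as Theorem 1.A of \cite{Biran:2001fk} and used downstream as a black box, so the only meaningful comparison is with Biran's original argument. Your sketch reconstructs that argument along essentially the same lines --- the defining section $s$ of the bundle with $c_{1}=k\left[\omega\right]$, the Hermitian metric normalised via the $\partial\bar{\partial}$-lemma so that Poincar\'e--Lelong makes $\varphi=-\log\left|s\right|_{h}^{2}$ a strictly plurisubharmonic exhaustion of $M\setminus\Sigma$, the Liouville flow whose stable set is the isotropic skeleton $\Delta$, the identification of the normal bundle of $\Sigma$ with $\mathcal{N}$, and the outward propagation of the local symplectic identification by the flow --- and the two points you flag as the main obstacles (the isotropic CW structure of $\Delta$ and the global, distortion-free flow extension) are exactly where the technical content of Biran's proof lies, resolved there much as you indicate, using that stable manifolds of a Liouville flow are automatically isotropic and that the Liouville structures of the two models can be matched.
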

\begin{rem}
As an immediate consequence of it being isotropic the dimension of
$\Delta$ is at most half of $M$'s. This simple fact will be useful
it in the proof of proposition~\ref{etirement cou}.
\end{rem}
Subsequently we will perform a small abuse of notation and denote
by $\pi:M\setminus\Delta\to\Sigma$ the composition $\pi\circ F^{-1}$.
\begin{example}
$\mathbb{C}P^{n}$, seen as the hyperplane $\left\{ z_{0}=0\right\} $
in $\mathbb{C}P^{n+1}$ constitutes an example with $k=1$.\end{example}
\begin{rem}
More recently, in~\cite{BK2011}, the notion of symplectic hyperplane
section was introduced. The results of this paper should hold in that
framework.
\end{rem}

\subsection{\label{sub:Lagrangian-circle-bundle}The Lagrangian circle fibration}

Within the previous framework, we denote by $P_{r_{0}}$ the $S^{1}$-bundle
over $\Sigma$ given by elements of radius $r_{0}$, with the projection
$\pi_{r_{0}}:P_{r_{0}}\to\Sigma$. Then we define $\Gamma_{L}$ as
$\pi_{r_{0}}^{-1}(L)$. Its tangent bundle can be locally decomposed
as the tangent of $L$ in $H^{\nabla}$ and of a circle in $\mathbb{C}$
vertically, hence it is a Lagrangian submanifold of $M$ and of $W=M\setminus\Sigma$.

We will now consider the morphism induced by $\pi:\left(W\setminus\Delta,\Gamma_{L}\right)\to\left(\Sigma,L\right)$
on the second relative homotopy groups. For a Lagrangian submanifold
$\Lambda$ of a symplectic manifold $\left(V,\omega\right)$ we denote
by $\mu_{K}:\pi_{2}\left(V,K\right)\to\mathbb{Z}$ the Maslov index.
Denote by $\iota:W\setminus\Delta\to W$ the inclusion.
\begin{prop}
(Proposition 4.1.A in \cite{Biran-InterPub}) \label{passage de maslov}If
$\dim_{\mathbb{C}}\Sigma>1$ or $W$ is subcritical:
\begin{enumerate}
\item The morphism $\iota_{*}:\pi_{2}\left(W\setminus\Delta\right)\to\pi_{2}\left(W\right)$
induced by the inclusion is surjective. When $\dim_{\mathbb{C}}\Sigma>2$,
it is an isomorphism.
\item for every $B\in\pi_{2}\left(W\setminus\Delta,\Gamma_{L}\right)$,
\[
\mu_{\Gamma_{L}}\left(B\right)=\mu_{L}\left(\pi_{*}B\right)
\]
In particular, if $L\subset\Sigma$ is monotone then $\Gamma_{L}\subset W$
is monotone too, with the same minimal Maslov number.
\end{enumerate}
\end{prop}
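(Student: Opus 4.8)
The plan is to prove the two assertions separately: the first by a codimension count, the second via a splitting of the Maslov index adapted to the symplectic bundle of Theorem~\ref{(Biran)Model}, followed by an area computation for the monotonicity claim.

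For the first assertion I would use that $W\setminus\Delta=F(\mathcal{N}\setminus 0_{\Sigma})$ is obtained from the open manifold $W$ (of real dimension $2n+2$) by deleting the isotropic CW-complex $\Delta$, which has dimension at most $n+1$ and hence real codimension at least $n+1$. A general-position argument then gives the result: a map $S^{2}\to W$ can be homotoped off $\Delta$ as soon as $2+(n+1)<2n+2$, i.e.\ $n>1$, which yields surjectivity of $\iota_{*}$ on $\pi_{2}$; and a homotopy $S^{2}\times[0,1]\to W$ can be pushed off $\Delta$ as soon as $3+(n+1)<2n+2$, i.e.\ $n>2$, which yields injectivity, hence the isomorphism when $\dim_{\mathbb{C}}\Sigma>2$. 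The remaining case $\dim_{\mathbb{C}}\Sigma=1$ with $W$ subcritical is immediate: then $W$ has the homotopy type of a CW-complex of dimension at most $n=1$, so $\pi_{2}(W)=0$ and surjectivity is automatic.

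For the second assertion I would exploit the splitting induced by the connection $\nabla$. With respect to $\omega_{std}$ the decomposition $T(\mathcal{N}\setminus 0_{\Sigma})=H^{\nabla}\oplus T^{\mathrm{vert}}$ is by symplectic subbundles (horizontal vectors are $\omega_{std}$-orthogonal to vertical ones, and $\omega_{std}$ is non-degenerate on each factor), and $T\Gamma_{L}$ splits compatibly as the horizontal lift of $TL$ together with the line $\mathbb{R}(iu)$ tangent to the circle $\{|u|=r_{0}\}$ in the fibre $\mathbb{C}$; the first is Lagrangian in $(H^{\nabla},\omega_{std})$ and the second in the fibre. By additivity of the Maslov index over such a symplectic splitting with compatibly split Lagrangian boundary condition, for a disc $w\colon(D^{2},\partial D^{2})\to(\mathcal{N}\setminus 0_{\Sigma},\Gamma_{L})$ with projection $u=\pi\circ w$ one obtains $\mu_{\Gamma_{L}}(w)=\mu_{L}(u)+\mu^{\mathrm{vert}}(w)$, where the horizontal factor $w^{*}H^{\nabla}\cong u^{*}T\Sigma$ carries the boundary condition $TL$ and so contributes $\mu_{L}(u)=\mu_{L}(\pi_{*}B)$. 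The key point is that $\mu^{\mathrm{vert}}(w)=0$: since $w$ takes values in $\mathcal{N}\setminus 0_{\Sigma}$ it is a \emph{nowhere-vanishing} section of $u^{*}\mathcal{N}\cong w^{*}T^{\mathrm{vert}}$ over the disc, and in the unitary trivialisation determined by $w/|w|$ the vertical boundary condition $\mathbb{R}(iw(\theta))$ becomes the constant Lagrangian line $\mathbb{R}i$, whose Maslov index vanishes. This establishes $\mu_{\Gamma_{L}}(B)=\mu_{L}(\pi_{*}B)$.

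For the monotonicity I would compute areas by Stokes. On $\mathcal{N}\setminus 0_{\Sigma}$ one has $\tfrac1k\omega_{std}=-\tfrac1k\,d(e^{-r^{2}}\alpha)$, and $r\equiv r_{0}$ along $\partial w$, so, using $d\alpha=-\pi^{*}\tau$ and $\tau=k\,\omega_{\Sigma}$,
\[
\int_{w}\tfrac1k\omega_{std}=-\tfrac1k e^{-r_{0}^{2}}\int_{\partial D^{2}}w^{*}\alpha=-\tfrac1k e^{-r_{0}^{2}}\int_{D^{2}}w^{*}d\alpha=\tfrac1k e^{-r_{0}^{2}}\int_{D^{2}}u^{*}\tau=e^{-r_{0}^{2}}\int_{\pi_{*}B}\omega_{\Sigma}.
\]
Combined with the Maslov equality, if $L$ is monotone with constant $\rho$ then $\Gamma_{L}$ is monotone with constant $e^{-r_{0}^{2}}\rho$. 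Equality of the minimal Maslov numbers follows because $\pi$ restricts to a fibration $\mathcal{N}\setminus 0_{\Sigma}\to\Sigma$ with fibre $\mathbb{C}^{*}\simeq S^{1}$ carrying $\Gamma_{L}\to L$, and since the pair $(\mathbb{C}^{*},S^{1})$ has vanishing relative homotopy, $\pi_{*}\colon\pi_{2}(\mathcal{N}\setminus 0_{\Sigma},\Gamma_{L})\to\pi_{2}(\Sigma,L)$ is an isomorphism; the first assertion then guarantees that replacing $W\setminus\Delta$ by $W$ introduces no disc classes of different Maslov index.

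I expect the second assertion to be the main obstacle, and within it the careful bookkeeping of the Maslov index: one must check that the tangent splitting is genuinely by symplectic subbundles with a compatibly split Lagrangian boundary condition so that additivity applies, and then pin down the vanishing of the vertical contribution. The homotopy-theoretic transfer of the minimal Maslov number between the relative $\pi_{2}$'s of $\Sigma$, $W\setminus\Delta$ and $W$ — especially reconciling the low-dimensional subcritical case — is the other point that requires care.
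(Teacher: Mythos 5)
The paper never proves this proposition: it is imported verbatim from Biran (Proposition 4.1.A of the cited reference), so there is no internal argument to compare against, and your proposal must be judged on its own merits. Judged so, it is essentially the standard argument and its two main components are correct. The general-position count for assertion 1 is right ($\dim\Delta\leqslant n+1$ because $\Delta$ is isotropic in $M^{2n+2}$, giving surjectivity for $n>1$ and injectivity for $n>2$), and the subcritical case $n=1$ is correctly disposed of by $\pi_{2}(W)=0$. For assertion 2, the splitting $T\left(\mathcal{N}\setminus0_{\Sigma}\right)=H^{\nabla}\oplus T^{\mathrm{vert}}$ really is $\omega_{std}$-orthogonal with both factors symplectic (both $\pi^{*}\tau$ and $\text{d}r\wedge\alpha$ annihilate horizontal--vertical pairs, using that the connection is Hermitian so that $\text{d}r$ vanishes on $H^{\nabla}$), the boundary condition $T\Gamma_{L}$ splits compatibly, and your trivialisation of $w^{*}T^{\mathrm{vert}}\cong u^{*}\mathcal{N}$ by the nowhere-vanishing section $w$ itself is exactly the right device: it makes the vertical boundary condition the constant line $\mathbb{R}i$, hence of Maslov index zero. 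The Stokes computation giving the monotonicity constant $e^{-r_{0}^{2}}\rho$ is also correct.

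The one place where you assert more than you prove is the final transfer. The conclusion of the proposition concerns $\Gamma_{L}\subset W$, but your Maslov identity and area computation only cover classes in $\pi_{2}\left(W\setminus\Delta,\Gamma_{L}\right)$, and assertion 1 as you proved it concerns the \emph{absolute} groups $\pi_{2}\left(W\setminus\Delta\right)\to\pi_{2}\left(W\right)$. What is actually needed is surjectivity of the \emph{relative} map $\pi_{2}\left(W\setminus\Delta,\Gamma_{L}\right)\to\pi_{2}\left(W,\Gamma_{L}\right)$; the sentence ``the first assertion then guarantees that replacing $W\setminus\Delta$ by $W$ introduces no disc classes of different Maslov index'' does not follow formally from it. This is fixable by either of two routes: (i) apply general position directly to discs $\left(D^{2},\partial D^{2}\right)\to\left(W,\Gamma_{L}\right)$ --- since $\Delta$ is closed and disjoint from $\Gamma_{L}$, the perturbation can be taken rel boundary, and the count $2+\dim\Delta<2n+2$ works for $n>1$; or (ii) run the four lemma on the two homotopy exact sequences of pairs, which additionally requires injectivity of $\pi_{1}\left(W\setminus\Delta\right)\to\pi_{1}\left(W\right)$, again by general position. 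In the subcritical case $n=1$ the count with $\dim\Delta\leqslant n+1$ fails; there one must invoke that the skeleton of a \emph{subcritical} Weinstein manifold $W^{2n+2}$ has dimension at most $n$, not merely $n+1$, after which both routes go through. With this bridging step supplied, your proof is complete.
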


\section{Results from the lifted Floer homology\label{sec:Mihais-work}}

Recall that in our setup, $L\subset\Sigma$ is monotone, hence $\Gamma_{L}\subset W$
too. Since $L$ is a compact $K\left(\pi,1\right)$ and $\Gamma_{L}$
a circle bundle over $L$ it is also a compact $K\left(\pi',1\right)$.
In particular, odd-numbered cohomology groups of its universal cover
$\tilde{\Gamma}_{L}$ vanish. Given a volume form $\upsilon$ on $L$,
we can construct a volume on $\Gamma_{L}$ by pulling back $\upsilon$
and wedging the result with $\alpha^{\nabla}$. Therefore, $\Gamma_{L}$
is orientable. Besides, we assumed $W$ to be subcritical: in particular
every compact subset is Hamiltonian displaceable (see \cite{Biran-InterPub}),
and this applies to $\Gamma_{L}$.
\begin{thm}
\label{Th=0000E9or=0000E8me Damian gamma}\cite{Dam-10} Let $\Lambda$
be a monotone, compact, orientable, Hamiltonian displaceable, Lagrangian
submanifold. Assume further that the odd-num\-bered cohomology groups
of its universal cover $\tilde{\Lambda}$ vanish. Then:
\begin{enumerate}
\item Its minimal Maslov number is $N_{\Lambda}=2$.
\item For any generic almost complex structure $J$ there exist $p\in\Lambda$
and a non-trivial $g\in\pi_{1}\left(\Lambda,p\right)$ such that the
number of pseudo-holomorphic discs $u$ evaluating in $p$ and verifying
$\left[\partial u\right]=g$ and $\mu_{\Lambda}\left(u\right)=2$
is (finite and) odd.
\end{enumerate}
\end{thm}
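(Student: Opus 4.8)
The plan is to prove both assertions at once through M.\ Damian's lifted Floer homology, the natural receptacle for the $\pi_1$-information we want to extract. First I would set up the self-Floer complex of $\Lambda$ in the monotone pearl formalism of Biran--Cornea, built on a Morse function $f\colon\Lambda\to\mathbb{R}$, and then lift it: I tensor the Morse complex with the group ring $\mathbb{Z}_2[\pi_1(\Lambda)]$ and let the pearl differential record, for each contributing $J$-holomorphic disc, the homotopy class of its boundary loop. Completing with respect to the energy filtration, one obtains a differential $\partial=\partial_0+\partial_1+\cdots$ whose homology $FH(\tilde\Lambda)$ is the lifted Floer homology, with $\partial_0$ the lifted Morse differential (computing a Novikov-type homology of $\tilde\Lambda$) and $\partial_j$ counting configurations of total Maslov index $jN_\Lambda$. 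The crucial input is that $\Lambda$ is Hamiltonian displaceable: displacing $\Lambda$ off itself kills the generators of the complex, so, the displacing isotopy being compatible with the lift, $FH(\tilde\Lambda)=0$ exactly as for ordinary Floer homology.

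The first assertion, $N_\Lambda=2$, I would extract from the energy spectral sequence associated to $\partial$, whose first page is built from the (Novikov) homology of $\tilde\Lambda$ and whose higher differentials $d_r$ are induced by $\partial_r$, shifting degree by $rN_\Lambda-1$. Since $\Lambda$ is orientable its Maslov class is even, so $N_\Lambda$ is even and every disc-differential shifts degree by an odd number; combined with the hypothesis $H^{\mathrm{odd}}(\tilde\Lambda)=0$, which concentrates the starting page in even degrees, a parity bookkeeping shows that the only differential able to mediate the collapse of the sequence to $FH(\tilde\Lambda)=0$ is the one coming from index-$2$ discs. In particular the minimal Maslov number cannot exceed $2$, and as $N_\Lambda\geq2$ always, $N_\Lambda=2$.

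For the second assertion I would track the same collapse at the chain level. Fixing a generic $J$ and a generic $p\in\Lambda$, the count $n(g)\in\mathbb{Z}_2$ of rigid (Maslov index $2$, hence zero-dimensional) $J$-holomorphic discs through $p$ with boundary class $g\in\pi_1(\Lambda,p)$ assembles into the relevant matrix coefficient of $\partial_1$ in $\mathbb{Z}_2[\pi_1(\Lambda)]$. The vanishing $FH(\tilde\Lambda)=0$ forces this coefficient to be non-zero, i.e.\ the total count $\sum_g n(g)$ is odd; on the other hand the contractible-boundary part $n(1)$ feeds only into the $d_1$-differential of the Novikov homology of $\tilde\Lambda$, whose relevant target lies in odd degree and so vanishes because $H^{\mathrm{odd}}(\tilde\Lambda)=0$, whence $n(1)$ is even. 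Hence $\sum_{g\neq1}n(g)$ is odd and some non-trivial $g$ has $n(g)=1$.

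It remains to promote this $g$ to one with finite-index centraliser, and here the geometry does the work: by monotonicity a Maslov-$2$ disc has a fixed symplectic area, so Gromov compactness leaves only finitely many homotopy classes of boundary among the discs through $p$, whence $\{g:n(g)=1\}$ is finite. Moving the base point conjugates the boundary class while preserving the count, so this finite set is invariant under conjugation; a non-trivial $g$ in it therefore has a finite conjugacy class, that is $[\pi_1(\Lambda):C(g)]<\infty$. The main obstacle, and where Damian's work is genuinely needed, is the analytic and homological-algebra core of the first three paragraphs: making the $\mathbb{Z}_2[\pi_1]$-coefficient, energy-completed pearl complex well defined (transversality and compactness over the non-compact cover, and $\partial^2=0$ after completion), and proving that the energy spectral sequence degenerates as claimed, so that the parity argument really does isolate index-$2$ discs rather than being washed out by the Novikov completion.
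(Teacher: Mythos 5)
This theorem is not proved in the paper at all --- it is imported verbatim from Damian's work \cite{Dam-10}, so the comparison must be with Damian's argument. Your proposal invokes the right machinery (lifted/$\mathbb{Z}_{2}[\pi_{1}]$-coefficient Floer--pearl complex, displaceability, the degree filtration and a parity argument), but its logical architecture is inverted in a way that constitutes a genuine gap. The cornerstone of your outline --- that the lifted complex satisfies $\partial^{2}=0$ (a ``technical point'' you delegate to Damian) and that its homology $FH(\tilde\Lambda)$ therefore vanishes by displaceability --- is simply false in the case $N_{\Lambda}=2$, which is exactly the case the theorem asserts we are in. The failure of $\partial^{2}=0$ comes from Maslov-$2$ disc bubbling: in the lifted complex the two boundary bubbles carry their boundary homotopy classes, act on opposite sides of the $\mathbb{Z}_{2}[\pi_{1}]$-module structure, and no longer cancel mod $2$ when those classes are non-trivial. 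The circle $S^{1}\subset\mathbb{C}$ already exhibits this: the lifted differential of the two-generator Floer complex gives $\partial\tilde{x}=(1+g)\tilde{y}$, $\partial\tilde{y}=\tilde{x}$, so $\partial^{2}\tilde{x}=(1+g)\tilde{x}\neq0$. Damian's actual proof runs by contradiction: \emph{assuming} conclusions (1)--(2) fail, one shows the bubbling contributions do cancel, so the lifted homology is defined, invariant, hence zero by displaceability, and this is then contradicted; the non-vanishing of the bubbling obstruction \emph{is} conclusion (2), not a technicality underneath it.

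This inversion also breaks your parity bookkeeping. With $\Lambda$ orientable, $N_{\Lambda}$ is even, so \emph{every} differential $d_{r}$ of the filtration spectral sequence shifts the underlying (Morse) degree by the odd number $rN_{\Lambda}-1$ --- the index-$2$ differential $d_{1}$ included. If, as you assume, the $E_{1}$ page is concentrated in even degrees (by $H^{\mathrm{odd}}(\tilde\Lambda)=0$) and the sequence genuinely converges to $FH(\tilde\Lambda)=0$, then \emph{all} differentials vanish and $E_{\infty}=E_{1}\neq0$, an outright contradiction: there is no ``only differential able to mediate the collapse''. Taken at face value your framework proves that no monotone, orientable, displaceable Lagrangian with $H^{\mathrm{odd}}(\tilde\Lambda)=0$ exists, which is absurd ($S^{1}\subset\mathbb{C}$, the Clifford torus, and the $\Gamma_{L}$ of this paper are such Lagrangians). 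The contradiction is the engine of the correct proof, and the only escape from it is the failure of $\partial^{2}=0$, i.e.\ the existence of the odd count of Maslov-$2$ discs with non-trivial boundary class. Finally, note that your last paragraph (finite-index centraliser) is not part of the statement --- in the paper it is the separate Proposition 3.2, quoted from \cite{Mihai2012} --- and your argument for it assumes without justification that the set of classes with odd count is conjugation-invariant and independent of the generic point $p$, which is precisely the delicate content of that reference.
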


\begin{prop}
\label{prop:Mihai Gamma reciproque}\cite{Mihai2012} When some non-trivial
$g\in\pi_{1}\left(\Lambda\right)$ complies with the result numbered
$2$ in~\ref{Th=0000E9or=0000E8me Damian gamma} as an hypothesis,
then its centraliser is of finite order.
\end{prop}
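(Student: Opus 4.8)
The plan is to read the conclusion off the algebra of the lifted Floer homology of $\Lambda$, since that is where the disc-counting hypothesis naturally lives (following \cite{Dam-10,Mihai2012}). First I would pass to the universal cover $q\colon\tilde{\Lambda}\to\Lambda$ and regard the Floer (or pearl) complex of $\Lambda$ as carrying coefficients in the group ring $\mathbb{Z}_{2}[\pi]$, where $\pi=\pi_{1}(\Lambda)$ acts by deck transformations: each generator lifts to a $\pi$-orbit of lifts, and a pseudo-holomorphic disc is weighted by the deck element $h\in\pi$ relating the lifts of its two ends. Fixing a lift $\tilde{p}$ of the point $p$ supplied by the hypothesis, the Maslov-$2$ discs through $p$ assemble into the group-ring element $\sigma=\sum_{h\in\pi}n_{h}\,h\in\mathbb{Z}_{2}[\pi]$, where $n_{h}$ is the count, modulo $2$, of discs $u$ with $u(1)=p$, $\mu_{\Lambda}(u)=2$ and $[\partial u]=h$. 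By Gromov compactness together with monotonicity only finitely many homotopy classes of such discs occur, so the support $S=\{h:n_{h}\neq0\}$ is finite; the hypothesis says precisely that $g\in S$ with $g\neq1$ and $n_{g}$ odd.

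Next I would feed in the two global hypotheses on $\Lambda$. Hamiltonian displaceability makes the lifted Floer homology vanish, while the vanishing of the odd cohomology of $\tilde{\Lambda}$ forces the classical part of the differential to be supported in even degrees; the interplay of the two (this is the content borrowed from \cite{Mihai2012}) yields an algebraic relation satisfied by $\sigma$ in $\mathbb{Z}_{2}[\pi]$, of the form ``$\sigma$ must cancel the whole of $H_{*}(\tilde{\Lambda};\mathbb{Z}_{2})$ against the Morse differential''. I would then track how $\sigma$ transforms under the remaining freedom in the construction: sliding the marked point around a loop $\gamma$ conjugates every boundary class by $[\gamma]$, so the multiset $S$ is stable under conjugation in $\pi$, while changing the base lift $\tilde{p}\mapsto h_{0}\tilde{p}$ multiplies $\sigma$ by $h_{0}$ on one side. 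Finiteness of $S$ together with its conjugation-stability, combined with the fact that $n_{g}$ is odd (so that $g$ cannot be cancelled against its own conjugates), then forces $g$ itself to have a finite conjugacy class in $\pi$.

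Finally I would convert this into the stated finiteness of the centraliser $C_{\pi}(g)$. The delicate point, and the one on which the statement as phrased turns, is that the disc count controls the conjugacy class of $g$ very directly: by the orbit--stabiliser relation for the conjugation action of $\pi$ on itself, the orbit of $g$ is its conjugacy class and the stabiliser is $C_{\pi}(g)$, so the previous step bounds the \emph{number of cosets} of $C_{\pi}(g)$. To reach finiteness of $C_{\pi}(g)$ as a set one needs a second, independent bound on the elements commuting with $g$, and the natural source for it is again $\sigma$: I would exhibit an action of $C_{\pi}(g)$ on the finite configuration of lifted Maslov-$2$ discs attached to the $\pi$-translates of $\tilde{p}$, and try to show the action is free, so that $C_{\pi}(g)$ embeds into a finite set. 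I expect constructing this finite, $C_{\pi}(g)$-invariant set to be the main obstacle, since the deck action moves the base lift $\tilde{p}$ and a priori only permutes discs \emph{between} different lifts; pinning down a genuinely finite invariant configuration — rather than merely the finite conjugacy class that yields finite index — is exactly the step requiring care, and in the intended application $\Lambda=\Gamma_{L}$ it is where the full strength of the relation on $\sigma$, read through all of $S$ at once, must be used.
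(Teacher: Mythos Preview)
The paper does not actually prove this proposition; it is quoted from \cite{Mihai2012} and used as a black box, so there is no in-paper argument to compare your sketch against line by line. Two remarks are nonetheless in order.

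First, the phrase ``of finite order'' in the proposition is a slip for ``of finite \emph{index}'': compare Theorems~\ref{prop:Main-result} and~\ref{thm:main result version CPn}, where the conclusion is always that the centraliser has finite index in $\pi_{1}$. Your first two paragraphs correctly aim at this and arrive, via finiteness of the support $S$ and its stability under conjugation, at the statement that the conjugacy class of $g$ is finite, hence $[\pi:C_{\pi}(g)]<\infty$. That is indeed the shape of Damian's argument in \cite{Mihai2012}: the mod~$2$ count of Maslov-$2$ discs through $p$ in a fixed free-homotopy class is independent of the marked point, so moving $p$ around a loop $\gamma$ shows $n_{h}=n_{\gamma h\gamma^{-1}}$ for every $\gamma$, and the finite set $\{h:n_{h}\text{ odd}\}$ is a union of conjugacy classes containing $g$.

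Second, your final paragraph is chasing the typo. You try to upgrade finite index to finite cardinality of $C_{\pi}(g)$ by manufacturing a free $C_{\pi}(g)$-action on a finite set of lifted discs; this cannot succeed, and your own hesitation is justified. For a concrete obstruction, take $\Lambda$ a flat torus (monotone, displaceable in a suitable ambient): then $\pi\cong\mathbb{Z}^{n}$ is abelian, so $C_{\pi}(g)=\pi$ is infinite for every $g$, while the hypothesis of the proposition is satisfied. Drop that paragraph and stop at ``finite conjugacy class $\Rightarrow$ finite index of the centraliser''; that is exactly what is needed downstream.
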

Now the purpose of the next part will be to repatriate this result
down on $L$ by constructing a one-to-one correspondence between the
pseudo-holomorphic discs on $\left(\Sigma,L\right)$ and those on
$\left(W,\Gamma_{L}\right)$ with corresponding boundary and Maslov
index $2$.

\section{Proof of the main theorem\label{sec:Proof}}

The crucial element in projecting the discs obtained by Damian's theorem~\ref{Th=0000E9or=0000E8me Damian gamma}
is the obtention of a suitable almost complex structure on $W$ through
the procedure of ''stretching the neck''. This procedure was first
presented in~\cite{BEH+}, and refined for our setting in~\cite{BK2011}.
The idea is that by modifying the almost complex structure on $W$
we can prevent pseudo-holomorphic discs of bounded energy -- such
as our Maslov-2 index ones in this monotone context -- to thread their
way too far from $P_{r_{0}}$ and in particular to approach $\Delta$.
As a consequence, their projection will be well-defined.

\subsection{\label{sub:Stretching-the-neck}Stretching the neck}

Let us begin by taking a generic almost complex structure $J_{\Sigma}$
on $\Sigma$, which is tamed by $\omega_{\Sigma}$. We will here again
use the notation 
\[
E_{r}=\left\{ \left(p,u\right)\in\mathcal{N},\left|u\right|\leqslant r\right\} 
\]
for the closed disc bundle of radius $r$ in $\mathcal{N}$.

Let us choose some $\epsilon>0$ such that the restriction of $F$
(as defined by Biran's theorem~\ref{(Biran)Model}) to $E_{r_{0}+\epsilon}$
is a diffeomorphism, where $r_{0}$ is the radius used to define $\Gamma_{L}$
in subsection~\ref{sub:Lagrangian-circle-bundle}. Then the complement
$U$ of $E_{r_{0}+\epsilon}$ in $M$ is a neighbourhood of $\Delta$
-- and the part of $M$ we want the discs to avoid.

We set an almost complex structure $J_{\mathcal{N}}$ on $\mathcal{N}$,
defined along the horizontal subbundle $H^{\nabla}$ as the ``pull-back''
of $J_{\Sigma}$ by $\pi$
\[
\forall v\in H^{\nabla},J_{\mathcal{N}}\left(v\right)=\left(T\pi_{\big|H^{\nabla}}\right)^{-1}\circ J_{\Sigma}\circ T\pi\left(v\right)
\]
and along the fibres as the multiplication by $i$. We then push it
by $F$ on $E_{r_{0}+\epsilon}=M\setminus U$ and denote by $J_{M}$
a generic extension on $M$ taming $\omega$. $J_{W}$ will denote
its restriction on $W$. 

Recall that $P$ is the bundle over $\Sigma$ of $r_{0}$-radius circles
in $\mathcal{N}$ which -- since it lies within $E_{r_{0}+\epsilon}$
-- can be thought as being in either $M$ or $W$. We will thereafter
consider the two connected components of respectively $M\setminus P$
and $W\setminus P$ with the following sign convention: $\Sigma\subset M^{+}$,
$\Delta\subset U\subset W^{-}=M^{-}$. For $R>0$ we put:
\[
W^{R}=W^{-}\underset{\left\{ -R\right\} \times P}{\bigcup}\left[-R,R\right]\times P\underset{\left\{ R\right\} \times P}{\bigcup}W^{+}
\]

On $W^{R}$ the almost complex structure is defined as $J_{W}$ on
$W^{-}$ and $W^{+}$, and by translation invariance in the middle
part. The resulting structure is only continuous on the glued boundaries
but can be slightly deformed near them to a smooth almost complex
structure which we denote by $J^{R}$. Furthermore, this smoothing
can be achieved using only the radial coordinate $t\in\left[-R,R\right]$
and the angular one $\theta$ in the (circle) fibre of $P$, so as
to be invisible once projected to $\Sigma$.

To push back $J^{R}$ on $W$, we make use of a (decreasing) diffeomorphism
$\varphi_{R}:\left[-R-\epsilon,R\right]\to\left[r_{0},r_{0}+\epsilon\right]$
such that its derivative satisfies $\varphi_{R}'\left(t\right)=-1$
near the boundary of $\left[-R-\epsilon,R\right]$. We set the diffeomorphism:
\[
\lambda_{R}:W^{R}\to W
\]
to be the identity on both $W^{+}$ and $U$, and between $\left[-R-\epsilon,R\right]\times P$
and $\left[r_{0},r_{0}+\epsilon\right]\times P$ to be induced by
$\varphi_{R}$ on the first coordinate. Here we made use of the identification
$W^{-}\setminus U\approx]r_{0},r_{0}+\epsilon]\times P$. Note that
$\lambda_{R}$ preserves the projection and the angular coordinate
wherever defined. Finally, we define $J_{R}$ on $W$ as the push-forward
$\left(\lambda_{R}\right)_{*}J^{R}$; it happens to tame $\omega$.
Besides $\pi$ is -- by construction of $J_{R}$ -- $J_{R}\text{-}J_{\Sigma}\text{-}$holomorphic
outside $U$.
\begin{prop}
\label{etirement cou}Suppose the minimal Chern number of $\Sigma$
is $N_{\Sigma}\geqslant2$. Let $p$ be a point in $\Gamma_{L}$.

Then there exists $R_{0}>0$ such that for every $J_{R}$ as described
above with $R>R_{0}$, every Maslov-$2$ $J_{R}$-holomorphic disc
$u:\left(D,\partial D\right)\to\left(W,\Gamma_{L}\right)$ passing
through $p$ is contained in the image $F\left(E_{r_{0}+\epsilon}\right)$.\end{prop}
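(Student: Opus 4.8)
The plan is to argue by contradiction using a Gromov-compactness/SFT-neck-stretching dichotomy: if no such $R_0$ worked, one could extract a sequence of escaping discs and build from them a broken configuration whose energy and Maslov/Chern budget cannot be balanced, contradicting the hypothesis $N_\Sigma\geqslant 2$. First I would suppose the statement fails, so that for every $R>0$ there is a Maslov-$2$ $J_R$-holomorphic disc $u_R:(D,\partial D)\to(W,\Gamma_L)$ through $p$ whose image leaves $F(E_{r_0+\epsilon})$, i.e. enters the region $U$ near $\Delta$. Transporting everything to the stretched manifold $W^R$ via $\lambda_R^{-1}$, I obtain genuine $J^R$-holomorphic discs $\tilde u_R$ through (the lift of) $p$, still of Maslov index $2$, hence of uniformly bounded symplectic area by monotonicity. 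The condition that $u_R$ escapes $F(E_{r_0+\epsilon})$ becomes the condition that $\tilde u_R$ crosses the neck region $[-R,R]\times P$ from one side essentially to the other, so its image meets points with neck-coordinate $t$ arbitrarily close to $-R$ as $R\to\infty$.

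The heart of the argument is then the SFT compactness theorem of \cite{BEH+}, in the neck-stretching form adapted in \cite{BK2011}: the sequence $\tilde u_R$ (after passing to a subsequence) converges to a \emph{holomorphic building}, a finite collection of pseudo-holomorphic curves living in the two caps $W^{\pm}$ and in the symplectization $\mathbb R\times P$, matched along asymptotic Reeb orbits on $P$. The total Maslov index of the boundary component together with the total Chern contribution of the interior components is preserved in the limit and equals $2$. Because our disc genuinely threaded the entire neck, the limiting building must contain at least one non-constant component lying on the $\Delta$-side, i.e. a component in $W^{-}=M^{-}$ (possibly a plane or sphere asymptotic to a Reeb orbit, or a level in the symplectization). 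The decisive step is a bookkeeping of the Fredholm/Chern indices: every such interior component contributes a positive multiple of the minimal Chern number $N_\Sigma$ of $\Sigma$ to the index, because the Reeb dynamics on $P$ is the Hopf-type $S^1$-action whose closed orbits detect $\pi^*\tau$ and hence the Chern class of $\Sigma$ (this is exactly where Biran's identification $\mathrm d\alpha=-\pi^*\tau$ and the horizontal-lift definition of $J_{\mathcal N}$ are used). Thus any non-trivial component on the $\Delta$-side forces a Chern contribution of at least $N_\Sigma\geqslant 2$, which added to the disc's own non-negative index already exhausts or exceeds the available budget of $2$, leaving nothing for the original boundary data and contradicting that $[\partial\tilde u_R]=g$ is non-trivial and the boundary component survives in the limit.

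The main obstacle, and the step demanding the most care, is precisely this index accounting: one must show that a component escaping toward $\Delta$ cannot be ``cheap.'' The isotropy of $\Delta$ (noted in the remark after Biran's theorem, $\dim\Delta\leqslant n$) is what prevents a low-energy curve from simply sliding into $\Delta$ for free; combined with the positivity of intersection of $J_M$-holomorphic curves with the complex hypersurface $\Sigma$ and the linking/asymptotic constraints of the broken curve, it guarantees that each $\Delta$-side component carries a strictly positive, $N_\Sigma$-quantised Chern number. I would formalise this via the standard monotone action--index relation on the capped-off sphere components and the fact, used implicitly in \cite{BK2011}, that the neck Reeb orbits are the fibres of $P\to\Sigma$ so that capping a puncture on the $M^-$ side contributes the first Chern number of $\Sigma$. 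Once this quantisation is established the contradiction is immediate, and running the argument for all $R$ large yields the desired uniform $R_0$; by Gromov compactness the escaping phenomenon cannot persist, so for $R>R_0$ every Maslov-$2$ disc through $p$ stays inside $F(E_{r_0+\epsilon})$, as claimed.
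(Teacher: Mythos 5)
Your skeleton matches the paper's proof: argue by contradiction with a sequence $R_{n}\to\infty$ of escaping Maslov-$2$ discs, transport them to the stretched manifolds $W^{R_{n}}$, extract a limiting holomorphic building via the compactness theorem of~\cite{BEH+}, and contradict $N_{\Sigma}\geqslant2$ by an index count. However, the two steps that actually produce the contradiction have genuine gaps. The index count: a sphere component of Chern number $c$ enters the Maslov identity with weight $2c$, so the paper's identity $2=\mu_{L}\left(\left[\pi_{+}\circ u_{+}\right]\right)+\sum_{i}2c_{1}^{\Sigma}\left(\left[\pi\circ u_{i}\right]\right)+2c_{1}^{\Sigma}\left(\left[v\right]\right)$ gives $2\geqslant0+2N_{\Sigma}\geqslant4$ the moment a single $\Delta$-side component exists --- an immediate contradiction requiring no information about the boundary class. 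You instead credit that component with only ``at least $N_{\Sigma}\geqslant2$'', which merely \emph{exhausts} the budget of $2$, and you then close the argument by appealing to the non-triviality of $\left[\partial\tilde{u}_{R}\right]=g$. That appeal is illegitimate here: the proposition concerns \emph{every} Maslov-$2$ $J_{R}$-holomorphic disc through $p$ and assumes nothing about its boundary class (non-trivial boundary classes only enter later, when Damian's theorem is combined with this proposition). So as written your final step does not close.

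Second, your mechanism for the positivity of the $\Delta$-side Chern contribution is wrong: ``positivity of intersection of $J_{M}$-holomorphic curves with $\Sigma$'' cannot be invoked, because every component of the building lives in $W_{\infty}^{\pm}$ or $\mathbb{R}\times P$, hence in the complement of $\Sigma$; there are no intersections with $\Sigma$ to exploit. What the paper actually does is different: since $\operatorname{codim}\Delta>2=\dim u_{-}$, the capping component is homotoped off $\Delta$ (this is where the isotropy of $\Delta$ is used --- you identified the right ingredient but not the argument), the perturbed curve is then projected to a sphere $v$ in $\Sigma$, and a Stokes computation with $\omega=-\mathrm{d}\left(e^{-r^{2}}\alpha^{\nabla}\right)$ identifies the symplectic area of $v$ with a positive multiple of $\int_{u_{-}^{-1}\left(W^{-}\right)}u_{-}^{*}\omega>0$; monotonicity of $\Sigma$ then yields $c_{1}^{\Sigma}\left(\left[v\right]\right)>0$, hence $\geqslant N_{\Sigma}$. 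Note that the projection of $u_{-}$ itself need not even be defined (it may meet $\Delta$), nor holomorphic where defined, so some such detour is unavoidable. Finally, a smaller gloss: uniform $\omega$-area of the discs does not immediately give the energy bound required by~\cite{BEH+}; the paper first proves $\int_{D^{2}}u'^{*}\omega\geqslant E_{\omega}\left(u\right)$ by splitting $\lambda_{R}^{*}\omega$ on the neck into horizontal and fibre parts (the fibre part being a square norm, hence non-negative), and only then applies Lemma 9.2 of~\cite{BEH+} to control the total energy. Your proposal treats this comparison as automatic.
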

\begin{proof}
Below we will follow the reasoning of~\cite{BK2011} and refer to
the results of~\cite{BEH+}, which also hold for holomorphic curves
with boundary on Lagrangian submanifolds.

We will assume by contradiction that for a generic almost complex
structure $J_{\Sigma}$ on $\Sigma$, there exists a sequence $R_{n}>0$
going to infinity with for every $n\in\mathbb{N}$ a Maslov-$2$ $J_{R_{n}}$-holomorphic
disc $u_{n}'$ in $W$ with its boundary on $\Gamma_{L}$ that leaves
the image of the $\left(r_{0}+\epsilon\right)$-disk bundle. We will
denote $J_{R_{n}}$ by $J_{n}$.

In~\cite{BEH+} Bourgeois, Eliashberg, Hofer et al. give a sense
to the idea of convergence for our sequence of pseudo-holomorphic
discs. In addition, they establish the compactness of a moduli space
where $\left(u'_{n}\right)$ lives, provided that its ``total energy''
is uniformly bounded.

Our sequence of discs leaving $E_{r_{0}+\epsilon}$ are the $u'_{n}:\left(D^{2},\partial D\right)\rightarrow\left(W,\Gamma_{L}\right)$,
and we denote by $u_{n}=\lambda_{R_{n}}^{-1}\circ u'_{n}:\left(D,\partial D\right)\to\left(W^{R_{n}},\Gamma_{L}\right)$
the same discs seen in $W^{R_{n}}$. We wish to establish a uniform
bound to the total energy of $\left(u_{n}\right)$. First, the $\omega$-energy
of some $J_{R}$-holomorphic $\mbox{u : \ensuremath{\left(D,\partial D\right)\to\left(W^{R},\Gamma_{L}\right)}}$
is:
\[
E_{\omega}\left(u\right)=\int_{u^{-1}\left(W^{+}\cup W^{-}\right)}u^{*}\omega+\int_{u^{-1}\left(\left[-R,R\right]\times P\right)}u^{*}p_{P}^{*}\omega
\]
where $p_{P}$ is the projection $\left[-R,R\right]\times P\to P$.
We wish to compare this quantity to $\int_{u^{-1}\left(\left[-R,R\right]\times P\right)}u'^{*}\omega$.
Since $\lambda_{R}$ on $\left[-R,R\right]\times P$ maps to $E_{r_{0},r_{0}+\epsilon}$
on which $\omega$ is canonical, we have: 
\[
\left(\lambda_{R}{}^{*}\omega\right)_{\big|\left[-R,R\right]\times P}=e^{-\varphi_{R}\left(t\right)^{2}}\pi_{\Sigma}^{*}\omega_{\Sigma}+2re^{-r^{2}}\text{d}r\wedge\alpha^{\nabla}
\]
Let us split $\int_{u^{-1}\left(\left[-R,R\right]\times P\right)}u'^{*}\omega$
as a sum and consider the first addend:
\[
\int_{u^{-1}\left(\left[-R,R\right]\times P\right)}u'^{*}\left(2re^{-r^{2}}\text{d}r\wedge\alpha^{\nabla}\right)=2\int_{u^{-1}\left(\left[-R,R\right]\times P\right)}u'^{*}\left(e^{-r^{2}}\right)u'^{*}\left(r\text{d}r\wedge\alpha^{\nabla}\right)
\]
is non-negative since $u'$ is $J_{R}$-holomorphic and $J_{R}$ is,
on the fibre, the usual product by $i$: the part $u'^{*}\left(r\text{d}r\wedge\alpha^{\nabla}\right)$
can be seen as a square norm.

Meanwhile, $e^{-\varphi_{R}\left(t\right)^{2}}\geqslant e^{-\left(r_{0}+\epsilon\right)}$
implies
\[
\int_{u^{-1}\left(\left[-R,R\right]\times P\right)}u^{*}\left(e^{-\varphi_{R}\left(t\right)^{2}}\pi_{\Sigma}^{*}\omega_{\Sigma}\right)\geqslant\int_{u^{-1}\left(\left[-R,R\right]\times P\right)}u^{*}\left(e^{-\left(r_{0}+\epsilon\right)}\pi_{\Sigma}^{*}\omega_{\Sigma}\right)
\]
since $\pi_{\Sigma}\circ u$ is $J_{\Sigma}$-holomorphic when defined,
given our choice of $J_{R}$. Combining the two inequalities, we have
\[
\int_{u^{-1}\left(\left[-R,R\right]\times P\right)}u'^{*}\omega\geqslant\int_{u^{-1}\left(\left[-R,R\right]\times P\right)}u^{*}p_{P}^{*}\omega
\]
And finally:
\[
\int_{D^{2}}u'^{*}\omega\geqslant E_{\omega}\left(u\right)
\]

Let's now remember that $\Gamma_{L}$ is monotone in $W$ and $u'_{n}$
has a Maslov index of $2$, hence the integral of $\omega$'s pull-back
by $\left(u'_{n}\right)$ is constant, which yields a bound on the
$\omega$-energies of $\left(u_{n}\right)$.

Now by lemma $9.2$ of~\cite{BEH+}, this bound implies one on the
total energy of $\left(u_{n}\right)$. Hence its main theorem $10.6$
holds and a subsequence of $\left(u_{n}\right)$ converges to a so-called
stable holomorphic building $\bar{u}$. Abusing the notation, we will
now refer to a converging subsequence by $\left(u_{n}\right)$.

To describe this limit, put $W_{\infty}^{+}=]-\infty,0]\times P\bigcup W^{+}$
and then $W_{\infty}^{-}=[0,+\infty[\times P\bigcup W^{-}$ respectively
glued on their boundaries. Extend $J_{W}$ on the cylindrical parts
$]-\infty,0]\times P$ $[0,+\infty[\times P$ by invariance under
translation and smooth it as $J_{R}$ was smoothed near the glued
parts. The disjoint union endowed with this almost-complex structure
will be denoted by $\left(W^{\infty},J_{\infty}\right)$, and can
be considered as the limit of $\left(W^{R},J_{R}\right)$ as $R\to\infty$.
On $\mathbb{R}\times P$ cylinders, $J_{\infty}$ is likewise defined
by invariance under translation.

Now $\bar{u}$ is a disconnected $J_{\infty}$-holomorphic curve which
consists of the following connected components:
\begin{itemize}
\item A base $J_{\infty}$-holomorphic map $u_{+}:\left(S_{+},\partial S_{+}\right)\to\left(W_{\infty}^{+},\Gamma_{L}\right)$,
where $S_{+}$ is a disc with one or more punctures. Near these punctures
$u_{+}$ is asymptotically cylindrical and converges to a periodic
orbit of the Reeb vector field of $\left(P,\alpha\right)$, where
$\alpha$ is the transgression $1$-form made explicit in~\ref{sub:Standard-symplectic-bundle}.
Given the choice of $\alpha$ the periodic orbits of the Reeb vector
field are precisely the fibres of the circle bundle $P\to\Sigma$.
\item A number of intermediate $J_{\infty}$-holomorphic maps $u_{i}:S_{i}\to\mathbb{R}\times P$
where each $S_{i}$ is a sphere with one or more punctures. Near those
punctures, the $u_{i}$ are asymptotically cylindrical with Reeb orbits
sections as well.
\item Some capping $J_{\infty}$-holomorphic maps, each of the form $u_{-}:S_{-}\to W_{\infty}^{-}$
where $S_{-}$ is a sphere with one or more punctures. $u_{-}$ is
asymptotically cylindrical near each puncture in a similar way to
$u_{+}$. To simplify the notation we will assume that there exists
one such map; in the case there are many, the argument is the same.
\end{itemize}
Moreover, those components fit over the punctures, i.e. to each asymptotical
cylinder corresponds another, with the same base orbit, in the other
direction on the $\mathbb{R}$ component. As such they can be glued,
and the result remains a topological disc. We wish to compute, component
by component, the Maslov index of $\bar{u}$.

By the definition of $J_{\infty}$ on $W_{\infty}^{+}$, the projection
$\pi_{+}:W_{\infty}^{+}\to\Sigma$ is $\left(J_{\infty},J_{\Sigma}\right)$-\-ho\-lo\-mor\-phic,
hence $\pi_{+}$ sends $u_{+}$ to a punctured disc $\pi_{+}\circ u_{+}:\left(S_{+},\partial S_{+}\right)\to\left(\Sigma,L\right)$.
The periodic orbits at infinity are projected by $\pi_{+}$ to single
points in $\Sigma$ since they are the fibres of the circle bundle
$P\to\Sigma$. Since the convergence near the puncture holds in the
$C^{1}$ norm and the limit has bounded energy, they give rise to
removable singularities on $\pi_{+}\circ u_{+}$. Therefore $\pi_{+}\circ u_{+}$
becomes a genuine $J_{\Sigma}$-holomorphic disc.

Likewise, the intermediate maps can be projected to $\Sigma$ by forgetting
the $\mathbb{R}$ coordinate and projecting $P$. By the same argument,
the singularities are removable and we obtain $J_{\Sigma}$-holomorphic
spheres.

Alas, we cannot straightly use this method for $u_{-}$, for it may
intersect the isotropic skeleton $\Delta$ and the projection, even
where defined, has no reason to be holomorphic. Since $\text{codim\,}\Delta>2=\dim u_{-}$,
we can at least perturb homotopically its part in $W^{-}\subset W_{\infty}^{-}$
so that the resulting $\tilde{u}_{-}$, while no more holomorphic,
avoids $\Delta$. We can now project this perturbed curve to $\Sigma$;
as before the singularities are removable and we obtain a sphere $v:S^{2}\to\Sigma$.
We claim that $v$ has a positive Chern number; since $\Sigma$ is
monotone it suffices to show that its symplectic area is positive.
We have:
\[
\int\limits _{S^{2}}v^{*}\omega_{\Sigma}=\int\limits _{S_{-}}\tilde{u}_{-}^{*}\pi_{-}^{*}\omega_{\Sigma}=\int\limits _{\tilde{u}_{-}^{-1}\left(W^{-}\right)}\tilde{u}_{-}^{*}\pi_{-}^{*}\omega_{\Sigma}+\int\limits _{\tilde{u}_{-}^{-1}\left(W_{\infty^{-}}\setminus W^{-}\right)}\tilde{u}_{-}^{*}\pi_{-}^{*}\omega_{\Sigma}
\]
Since $u_{-}$ is not perturbed on $W_{\infty^{-}}\setminus W^{-}$,
where $\pi_{-}$ is besides $\left(J_{\infty},J_{\Sigma}\right)$-ho\-lo\-mor\-phic,
the second addend is positive. For the first addend:
\begin{eqnarray*}
\int\limits _{\tilde{u}_{-}^{-1}\left(W^{-}\right)}\tilde{u}_{-}^{*}\pi_{-}^{*}\omega_{\Sigma} & = & \int\limits _{\tilde{u}_{-}^{-1}\left(W^{-}\setminus\Delta\right)}\tilde{u}_{-}^{*}\left(-\text{d}\alpha^{\nabla}\right)\\
 & = & e^{\left(r_{0}+\epsilon\right)^{2}}\int\limits _{\partial\tilde{u}_{-}^{-1}\left(W^{-}\setminus\Delta\right)}\tilde{u}_{-}^{*}\left(-e^{-\left(r_{0}+\epsilon\right)^{2}}\alpha^{\nabla}\right)\\
 & = & e^{\left(r_{0}+\epsilon\right)^{2}}\int\limits _{\tilde{u}_{-}^{-1}\left(W^{-}\setminus\Delta\right)}\tilde{u}_{-}^{*}\text{d}\left(-e^{-r^{2}}\alpha^{\nabla}\right)\\
 & = & e^{\left(r_{0}+\epsilon\right)^{2}}\int\limits _{\tilde{u}_{-}^{-1}\left(W^{-}\right)}\tilde{u}_{-}^{*}\omega\\
 & = & e^{\left(r_{0}+\epsilon\right)^{2}}\int\limits _{u_{-}^{-1}\left(W^{-}\right)}u_{-}^{*}\omega
\end{eqnarray*}
which is positive since $u_{-}$ is $J_{\infty}$-holomorphic. Finally,
$c_{1}^{\Sigma}\left(\left[v\right]\right)>0$.

It suffices to use that:
\[
2=\mu_{\Gamma_{L}}\left(\bar{u}\right)=\mu_{L}\left(\left[\pi_{+}\circ u_{+}\right]\right)+\sum\limits _{i}2c_{1}^{\Sigma}\left(\left[\pi\circ u_{i}\right]\right)+2c_{1}^{\Sigma}\left(\left[v\right]\right)
\]
with all the Chern classes positive. Given that $\pi_{+}\circ u_{+}$
is $J_{\Sigma}$-holomorphic, its Maslov index is non-negative; and
$N_{\Sigma}\geqslant2$ implies that each Chern class is at least
$2$. We have reached a contradiction.
\end{proof}

\subsection{Regularity of the almost complex structure}

The result we use from~\cite{Dam-10} is derived from a flavour of
Floer homology called the \emph{lifted Floer homology}. As its older,
non-lifted counterpart, it requires regularity of the almost complex
structure in the sense given by McDuff and Salamon in~\cite{mcduff2004j}.

Recall that the choice of almost complex structure $J_{R}$ on $W^{R}$
in~\ref{sub:Stretching-the-neck} was only partially free. More specifically
we could choose any structure taming $\omega$ on a neighbourhood
$U$ of $W^{-}$, while on $E_{r_{0}+\epsilon}$ the definition was
split between pulling back on the horizontal distribution $H^{\nabla}$
some almost complex structure $J_{\Sigma}$ taming $\omega_{\Sigma}$,
and multiplying by $i$ in the fibres.

We aim to establish the surjectivity of the linearization of the $\bar{\partial}$\textendash{}operator
$D_{u}$ at each $J_{R}$-holomorphic disk $u:\left(D,\partial D\right)\to\left(W^{R},\Gamma_{L}\right)$.
Since the almost complex structure on $U$ can be chosen arbitrarily,
the general theory establishes regularity for discs going out of $E_{r_{0}+\epsilon}$.

To treat the discs staying within $E_{r_{0}+\epsilon}$, we first
remark that $\lambda_{R}$ identifies $\left(E_{r_{0}+\epsilon},J_{R}\right)$
with $\left(E'_{R},J^{R}\right)$ where $E'_{R}=\left[-R,R\right]\times P\cup_{\left\{ R\right\} \times P}W^{+}$.
We will reason within this later setting. Let us denote by $J_{H}$
the restriction of $J^{R}$ on $H^{\nabla}$, which is essentially
$J_{\Sigma}$ through the identification given by $T\pi$. The definition
of $J^{R}$ can be written as: $\left(TE'_{R},J^{R}\right)\simeq\left(H^{\nabla},J_{H}\right)\oplus\pi^{*}\left(\mathcal{N},i\right)$.
Let $u$ be a disc that stays inside $E'_{R}$, with $j$ being the
complex structure on the unit disc. We have:
\[
\bar{\partial}_{J^{R}}\left(u\right)=\frac{1}{2}\left(Tu^{\big|\pi^{*}\mathcal{N}}+\left(\pi^{*}i\right)\circ Tu^{\big|\pi^{*}\mathcal{N}}\circ j\right)\oplus\frac{1}{2}\left(Tu^{\big|H^{\nabla}}+J_{H}\circ Tu^{\big|H^{\nabla}}\circ j\right)
\]
We notice that
\[
Tu^{\big|H^{\nabla}}+J_{H}\circ Tu^{\big|H^{\nabla}}\circ j=\left(T\pi_{\big|H^{\nabla}}\right)^{-1}\left(T\left(\pi\circ u\right)+J_{\Sigma}\circ T\left(\pi\circ u\right)\circ j\right)
\]
is zero if and only if $\pi\circ u$ is $J_{\Sigma}$-holomorphic.

Just as the operator $\bar{\partial}_{J_{R}}$, the vector bundle
$\mathcal{E}$ of smooth $J_{R}$-anti-linear $1$-forms over the
smooth maps $\left(D^{2},S^{1}\right)\to\left(W^{R},\Gamma_{L}\right)$
splits as a direct sum:
\[
\mathcal{E}_{u}\approx\Omega_{i}^{0,1}\left(u^{*}\pi^{*}\mathcal{N}\right)\oplus\Omega_{J_{H}}^{0,1}\left(u^{*}H^{\nabla}\right)
\]
The right addend effectively corresponds to the smooth $J_{\Sigma}$-anti-linear
$1$-forms over the smooth maps $\left(D^{2},S^{1}\right)\to\left(\Sigma,L\right)$.
To the right hand of the $\bar{\partial}_{J_{R}}$ we can associate
a linearization which is surjective if and only if the linearization
$\bar{\partial}_{J_{\Sigma}}$ is surjective. This last point is achieved
thanks to the genericity of $J_{\Sigma}$.

To deal with the left addend, we can consider a holomorphic trivialisation
$g:\left(\pi\circ u\right)^{*}\mathcal{N}\to D\times\mathbb{C}$.
This yields the identifications: 
\[
\Omega_{i}^{0,1}\left(\left(\pi\circ u\right)^{*}\mathcal{N}\right)\approx_{g}\Omega^{0,1}\left(\mathbb{C}\right)
\]
and 
\[
\frac{1}{2}\left(Tu^{\big|\pi^{*}\mathcal{N}}+\left(\pi^{*}i\right)\circ Tu^{\big|\pi^{*}\mathcal{N}}\circ j\right)=g^{-1}\circ\bar{\partial}
\]
Since this almost complex structure (multiplication by $i$) is regular
we obtain the surjectivity on the left-hand part.

\subsection{Uniqueness of the pseudo-holomorphic discs lifting}

Let us first recall the lemma $7.1.1$ of \cite{BK2011}:
\begin{prop}
\label{relevement disques}Let $u:\left(D^{2},S^{1}\right)\to\left(\Sigma,L\right)$
be a $J_{\Sigma}$-holomorphic disc. Given $\xi\in S^{1}$ and $\tilde{p}\in\Gamma_{L}\cap\pi^{-1}\left(u(\xi)\right)$
there is a unique $J_{\mathcal{N}}$-holomorphic lift $\tilde{u}:\left(D^{2},S^{1}\right)\to\left(\mathcal{N}\setminus\Sigma,\Gamma_{L}\right)$
of $u$ such that $\tilde{u}(\xi)=\tilde{p}$.
\end{prop}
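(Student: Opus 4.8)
The plan is to fix the given $J_{\Sigma}$-holomorphic disc $u$ and reduce the lifting problem to a single linear Cauchy--Riemann equation for a section of the pulled-back line bundle, which can then be solved explicitly on the disc. First I would record that, by the very definition of $J_{\mathcal{N}}$ recalled in the preceding subsection, the projection $\pi$ is $\left(J_{\mathcal{N}},J_{\Sigma}\right)$-holomorphic. Consequently any lift $\tilde{u}$ of $u$ satisfies $\pi\circ\tilde{u}=u$, and the horizontal summand of $\bar{\partial}\tilde{u}$ — the term that vanishes exactly when $\pi\circ\tilde{u}$ is $J_{\Sigma}$-holomorphic, in the splitting recorded above — vanishes automatically, since $u$ is $J_{\Sigma}$-holomorphic by hypothesis. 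Writing $\tilde{u}$ through the horizontal--vertical decomposition, its vertical part is a section $s$ of the complex line bundle $u^{*}\mathcal{N}\to D^{2}$, and $\tilde{u}$ is $J_{\mathcal{N}}$-holomorphic if and only if $s$ is holomorphic for the Cauchy--Riemann operator induced by the connection $\nabla$. Because $D^{2}$ is a Riemann surface there are no $\left(0,2\right)$-forms, so this operator is integrable and endows $u^{*}\mathcal{N}$ with a genuine holomorphic structure.

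Next I would trivialise. Since $D^{2}$ is contractible, $u^{*}\mathcal{N}$ is holomorphically trivial, and, just as in the regularity subsection, a holomorphic trivialisation $g:u^{*}\mathcal{N}\to D^{2}\times\mathbb{C}$ identifies holomorphic sections with genuine holomorphic functions $f:D^{2}\to\mathbb{C}$. Under this identification the constraint that $\tilde{u}$ avoid the zero section — i.e.\ land in $\mathcal{N}\setminus\Sigma$ — becomes the condition that $f$ be nowhere zero, while the boundary condition $\tilde{u}\left(\partial D^{2}\right)\subset\Gamma_{L}$, namely $\left|s\right|\equiv r_{0}$, becomes $\left|f\right|=\rho$ on $\partial D^{2}$ for a prescribed smooth positive function $\rho$: the constant $r_{0}$ twisted by the Hermitian metric read off in the trivialisation. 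Finally the normalisation $\tilde{u}\left(\xi\right)=\tilde{p}$ becomes the prescription of the nonzero value $f\left(\xi\right)$.

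The lifting problem is thereby reduced to a classical one: find a nowhere-vanishing holomorphic function $f$ on the disc, continuous up to the boundary, with prescribed boundary modulus $\rho$ and one prescribed boundary value. For existence I would write $f=e^{h}$ with $h$ holomorphic, so that $\left|f\right|=\rho$ reads $\operatorname{Re}h=\log\rho$ on $\partial D^{2}$; solving the Dirichlet problem produces the harmonic function $\operatorname{Re}h$, and taking a harmonic conjugate — which exists and is unique up to an additive real constant because the disc is simply connected — yields $h$, hence an $f$ that is automatically nowhere zero. The residual real constant is a unimodular multiplicative phase, pinned down uniquely by the requirement that $f\left(\xi\right)$ take the prescribed value. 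For uniqueness, given two solutions $f_{1},f_{2}$, the quotient $f_{1}/f_{2}$ is holomorphic and nowhere zero on $D^{2}$, continuous up to the boundary and of modulus $1$ there; the maximum modulus principle applied to $f_{1}/f_{2}$ and to $f_{2}/f_{1}$ forces $\left|f_{1}/f_{2}\right|\equiv1$, so the quotient is a unimodular constant, which the base-point condition identifies as $1$.

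The main obstacle is the first step: making precise that $J_{\mathcal{N}}$-holomorphicity of a lift decouples into the (automatic) horizontality of $u$ and a single linear Cauchy--Riemann equation on $u^{*}\mathcal{N}$, and that this operator defines an honest holomorphic structure that trivialises over the disc. This is the conceptual heart, and it rests on the explicit form of $J_{\mathcal{N}}$ together with working over a one-dimensional base; once it is in place, the remaining analysis — solvability of the Dirichlet problem and the boundary regularity needed for the maximum modulus argument — is entirely standard.
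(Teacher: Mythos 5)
Your proof is correct, and it is essentially the argument behind the original result: the paper itself offers no proof of this proposition, quoting it verbatim as Lemma 7.1.1 of Biran--Khanevsky \cite{BK2011}, whose proof likewise identifies lifts of $u$ with sections of $u^{*}\mathcal{N}$, reduces $J_{\mathcal{N}}$-holomorphicity to the linear Cauchy--Riemann equation induced by the connection, trivialises holomorphically over the disc, and solves the resulting prescribed-boundary-modulus problem for a nowhere-vanishing holomorphic function. The splitting you flag as the conceptual heart --- horizontal part vanishing because $u$ is $J_{\Sigma}$-holomorphic, vertical part giving the CR operator on $u^{*}\mathcal{N}$ --- is exactly the decomposition of $\bar{\partial}_{J^{R}}$ that the paper records in its regularity subsection, so that step is sound and fully consistent with the paper's own formulas.
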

We can actually check that the boundary of those lifted discs is as
required, as we have more precisely:
\begin{prop}
Let $u:\left(D^{2},S^{1}\right)\to\left(\Sigma,L\right)$ be a $J_{\Sigma}$-holomorphic
disc such that $\mu_{L}\left(u\right)=2$, $\left[\partial u\right]=\pi_{*}\tilde{g}$
and $u$ passes through $p\in L$. If $\tilde{u}$ is the pseudo-holomorphic
lift of $u$ passing through $\tilde{p}\in\Gamma_{L}\cap\pi^{-1}\left(p\right)$,
then $\mu_{\Gamma_{L}}\left(\tilde{u}\right)=2$ and $\left[\partial\tilde{u}\right]=\tilde{g}$.\end{prop}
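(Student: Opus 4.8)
The plan is to prove the two assertions separately: the Maslov index falls out directly from the earlier transgression computation, while the boundary class requires a winding argument in the fibre direction.

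For the Maslov index I would simply note that, being a lift, $\tilde u$ satisfies $\pi\circ\tilde u=u$, so it represents a class in $\pi_2\left(W\setminus\Delta,\Gamma_L\right)$ with $\pi_*\left[\tilde u\right]=\left[u\right]$. Proposition~\ref{passage de maslov} then gives at once $\mu_{\Gamma_L}\left(\tilde u\right)=\mu_L\left(\pi_*\left[\tilde u\right]\right)=\mu_L\left(u\right)=2$. For the boundary, the long exact sequence of the circle fibration $S^1\hookrightarrow\Gamma_L\xrightarrow{\pi}L$ shows that $\pi_*:\pi_1\left(\Gamma_L\right)\to\pi_1\left(L\right)$ is surjective with kernel generated by the (central) fibre class $f$. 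Since $\pi\circ\partial\tilde u=\partial u$ we get $\pi_*\left[\partial\tilde u\right]=\left[\partial u\right]=\pi_*\tilde g$, whence $\left[\partial\tilde u\right]=\tilde g\,f^{m}$ for some $m\in\mathbb{Z}$, and the whole problem reduces to showing $m=0$.

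To compute $m$ I would exploit that $\tilde u$ avoids the zero section $\Sigma$. Viewing $\tilde u$ as a section $\sigma$ of the pulled-back line bundle $u^*\mathcal N$ over the disc, avoiding $\Sigma$ means $\sigma$ is nowhere vanishing; as $u^*\mathcal N$ is trivial over the contractible disc, in a unitary trivialisation the boundary restriction $\sigma|_{\partial D}$ extends to a non-vanishing map on all of $D^2$, so it has fibre-winding $0$ relative to $u$. By Theorem~\ref{Th=0000E9or=0000E8me Damian gamma} the element $\tilde g$ is realised as the boundary of a Maslov-$2$ $J_W$-holomorphic disc $v$ in $\left(W,\Gamma_L\right)$; applying the same argument to $v$ (and using $\mu_L\left(\pi\circ v\right)=\mu_{\Gamma_L}\left(v\right)=2$ via Proposition~\ref{passage de maslov}) shows that it too has fibre-winding $0$ relative to $\pi\circ v$. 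Comparing $\partial\tilde u$ and $\partial v=\tilde g$ then amounts to gluing the spanning discs $u$ and $\overline{\pi\circ v}$ along a homotopy in $L$ between their boundaries — which exists since both represent $\pi_*\tilde g$ in $\pi_1\left(L\right)$ — producing a closed class $A\in\pi_2\left(\Sigma\right)$. The winding difference is exactly the Euler number of the circle bundle over $A$, i.e. $m=\langle c_1\left(\mathcal N\right),A\rangle=k\,\omega_\Sigma\left(A\right)$, using $c_1\left(\mathcal N\right)=\left[\tau\right]=k\left[\omega_\Sigma\right]$ from Biran's model~\ref{(Biran)Model}.

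Finally, additivity of the Maslov index gives $\mu_L\left(A\right)=\mu_L\left(u\right)-\mu_L\left(\pi\circ v\right)=0$, so $c_1^\Sigma\left(A\right)=0$; monotonicity of $\Sigma$ then forces $\omega_\Sigma\left(A\right)=0$, hence $m=0$ and $\left[\partial\tilde u\right]=\tilde g$. I expect the genuinely delicate step to be the identification of the fibre-winding difference $m$ with the Euler number $\langle c_1\left(\mathcal N\right),A\rangle$: this requires setting up the free homotopy of the two boundary loops and the resulting capping surface with care, tracking basepoints and orientations, and verifying that the two independently-vanishing relative windings assemble precisely into the Euler class of $\Gamma_L\to L$ evaluated on $A$. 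It is worth emphasising that the Maslov index alone cannot settle $m$, since Proposition~\ref{passage de maslov} shows $\mu_{\Gamma_L}$ is blind to the fibre class $f$; the Euler-class computation is therefore unavoidable, and the remaining inputs (monotonicity of $\Sigma$ and $c_1\left(\mathcal N\right)=k\left[\omega_\Sigma\right]$) are routine.
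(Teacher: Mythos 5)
Your treatment of the Maslov index is identical to the paper's (Proposition~\ref{passage de maslov} applied to the lift), but for the boundary class you take a genuinely different route. The paper also starts from Damian's theorem~\ref{Th=0000E9or=0000E8me Damian gamma}, which supplies a comparison disc $\tilde{u}_{0}$ through $\tilde{p}$ with Maslov index $2$ and boundary class exactly $\tilde{g}$; but instead of a winding argument it proceeds analytically: monotonicity of $\Gamma_{L}$ in $W$ forces $\int_{D^{2}}\tilde{u}^{*}\omega=\int_{D^{2}}\tilde{u}_{0}^{*}\omega$ (equal Maslov indices give equal areas); since both discs avoid $\Delta$ one may write $\omega=-\mathrm{d}\left(e^{-r^{2}}\alpha^{\nabla}\right)$ and apply Stokes, so the two boundary loops have equal $\alpha^{\nabla}$-integrals; finally $\alpha^{\nabla}$ restricted to $\Gamma_{L}$ is closed (because $L$ is Lagrangian), so these integrals depend only on the classes $\tilde{g}$ and $\tilde{g}\left[\gamma\right]^{l}$, and positivity of $\int_{S^{1}}\gamma^{*}\alpha^{\nabla}$ kills $l$. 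Your replacement --- identifying the fibre-winding discrepancy $m$ with $\left\langle c_{1}\left(\mathcal{N}\right),A\right\rangle =k\,\omega_{\Sigma}\left(A\right)$ for the glued sphere $A$, then killing $\omega_{\Sigma}\left(A\right)$ by monotonicity --- is sound, and trades the transgression-form computation for a clutching/obstruction-theoretic one. You could even shortcut your last step: monotonicity of $L$ alone gives $\omega\left(u\right)=\omega\left(\pi\circ v\right)$ since both discs have Maslov index $2$, hence $\omega_{\Sigma}\left(A\right)=0$ directly, without passing through $c_{1}^{\Sigma}\left(A\right)$ and monotonicity of $\Sigma$.

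There is, however, one genuine gap: you treat the comparison disc $v$ produced by Damian's theorem as projectable to $\Sigma$ --- you write $\pi\circ v$, view $v$ as a nowhere-vanishing section over it, and apply Proposition~\ref{passage de maslov} to it --- but Damian's theorem only yields a disc in $\left(W,\Gamma_{L}\right)$, and $W$ contains the isotropic skeleton $\Delta$, over which $\pi$ is not defined (and Proposition~\ref{passage de maslov} requires classes in $\pi_{2}\left(W\setminus\Delta,\Gamma_{L}\right)$). Unlike $\tilde{u}$, which avoids $\Delta$ automatically because Proposition~\ref{relevement disques} produces it inside $\mathcal{N}\setminus\Sigma$, whose image under $F$ lies in $W\setminus\Delta$, the disc $v$ has a priori no reason to stay inside $F\left(E_{r_{0}+\epsilon}\right)$. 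This is precisely what the neck-stretching Proposition~\ref{etirement cou} provides: one must invoke Damian's theorem for the almost complex structure $J_{R}$ with $R>R_{0}$ (made admissible and generic as in the regularity subsection), and then conclude that every Maslov-$2$ disc through $\tilde{p}$ --- in particular $v$ --- is contained in $F\left(E_{r_{0}+\epsilon}\right)$ and hence avoids $\Delta$. The paper's proof does exactly this in one sentence; with that invocation added, your argument goes through.
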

\begin{proof}
We know there is an odd natural number, which is in particular positive,
of $J_{\mathcal{N}}$-holomorphic curves $\left(D^{2},S^{1}\right)\to\left(W,\Gamma_{L}\right)$
passing through $\tilde{p}$, with Maslov index $2$ and boundary
in $\tilde{g}$. Let us denote one by $\tilde{u}_{0}$.

Using proposition~\ref{passage de maslov}, we have that $\mu_{\Gamma_{L}}\left(\tilde{u}\right)=\mu_{L}\left(u\right)=2$.
By our main argument of neck-stretching~\ref{etirement cou} both
$\tilde{u}_{0}$ and $\tilde{u}$ avoid $\Delta$.

Let us define 
\begin{eqnarray*}
\gamma:S^{1} & \longrightarrow & \Gamma_{L}\\
v & \longmapsto & v.\tilde{p}
\end{eqnarray*}
so that $\ker\pi_{*}=\langle\left[\gamma\right]\rangle\subset\pi_{1}\left(\Gamma_{L}\right)$.
Since $\pi_{*}\left[\partial\tilde{u}\right]=\left[\partial u\right]=g=\pi_{*}\tilde{g}$,
there is some $l\in\mathbb{Z}$ such that $\left[\partial\tilde{u}\right]=\tilde{g}\left[\gamma\right]^{l}$.
Furthermore, since $\mu_{\Gamma_{L}}\left(\tilde{u}\right)=\mu_{\Gamma_{L}}\left(\tilde{u}_{0}\right)$
by monotonicity we have:
\[
\int\limits _{D^{2}}\tilde{u}_{0}^{*}\omega=\int\limits _{D^{2}}\tilde{u}^{*}\omega
\]
Since we avoid $\Delta$, $\omega=-\text{d}\left(e^{-r^{2}}\alpha^{\nabla}\right)$
and by applying the Stokes formula:
\[
e^{-r_{0}^{2}}\int\limits _{S^{1}}\partial\tilde{u}_{0}^{*}\alpha^{\nabla}=e^{-r_{0}^{2}}\int\limits _{S^{1}}\partial\tilde{u}^{*}\alpha^{\nabla}
\]
Hence 
\[
\int\limits _{S^{1}}\partial\tilde{u}_{0}^{*}\alpha^{\nabla}=\int\limits _{S^{1}}\partial\tilde{u}_{0}^{*}\alpha^{\nabla}+l\int\limits _{S^{1}}\gamma^{*}\alpha^{\nabla}
\]
Since $\int\limits _{S^{1}}\gamma^{*}\alpha^{\nabla}>0$, $l=0$ so
$\left[\partial\tilde{u}\right]=\tilde{g}$.
\end{proof}

\section{\label{sec:Applications}Applications}

This section will be dedicated to the proof of theorem~\ref{thm:main result version CPn},
and corollaries~\ref{cor:corollaire prinicpal} and~\ref{cor:application S1xS2k CP2k+1},
as stated in the introduction.

Recall that the minimal hypothesis for theorem \ref{Th=0000E9or=0000E8me Damian gamma}
and proposition \ref{prop:Mihai Gamma reciproque} concerning the
topology of the Lagrangian is actually less stringent than being a
$K\left(\pi,1\right)$: it is enough for all of the odd-numbered cohomology
groups of its universal cover to vanish. As the rest of our proof
is not affected by this change, this is the condition we will look
for henceforth.

In particular $L$ is not assumed to be a $K\left(\pi,1\right)$ anymore
unless specified.

\subsection{On the triviality of $\Gamma_{L}$}

Let us begin with a direct application of this weaker condition:
\begin{prop}
If $\Gamma_{L}$ is a trivial circle bundle over $L$, and all the
odd-numbered cohomology groups of its universal cover $\tilde{L}$
vanish, then the Maslov number $N_{L}$ of $L$ is $2$ and there
exists some non-trivial $g\in\pi_{1}\left(L\right)$ such that its
centraliser is of finite index.\end{prop}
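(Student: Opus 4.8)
The plan is to check that the circle bundle $\Gamma_L$ still meets every hypothesis of Damian's Theorem~\ref{Th=0000E9or=0000E8me Damian gamma}, to apply it together with Proposition~\ref{prop:Mihai Gamma reciproque} upstairs in $W$, and then to push the resulting algebraic information back down to $\pi_1(L)$. First I would observe that the only role played by the hypothesis ``$L$ is a $K(\pi,1)$'' in Section~\ref{sec:Mihais-work} was to guarantee the vanishing of the odd-numbered cohomology of $\tilde\Gamma_L$; the remaining hypotheses of Theorem~\ref{Th=0000E9or=0000E8me Damian gamma} — monotone (Proposition~\ref{passage de maslov}), compact, orientable, and Hamiltonian displaceable (subcriticality of $W$) — were obtained without it. Under the present assumptions that cohomological vanishing is recovered directly: since $\Gamma_L$ is a trivial bundle, $\Gamma_L\cong L\times S^1$ and its universal cover is $\tilde L\times\mathbb{R}$, which deformation retracts onto $\tilde L$, so $H^{\mathrm{odd}}(\tilde\Gamma_L)\cong H^{\mathrm{odd}}(\tilde L)=0$. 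Thus Theorem~\ref{Th=0000E9or=0000E8me Damian gamma} applies to $\Gamma_L$ and yields $N_{\Gamma_L}=2$ — whence $N_L=2$ by Proposition~\ref{passage de maslov} — together with a point $\tilde p$ and a non-trivial $\tilde g\in\pi_1(\Gamma_L,\tilde p)$ for which the number of Maslov-$2$ $J_W$-holomorphic discs through $\tilde p$ with boundary in the class $\tilde g$ is odd.

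The crux is to show that $g:=\pi_*\tilde g$ is non-trivial in $\pi_1(L)$, and this is precisely where $N_\Sigma\geqslant 2$ is indispensable. Suppose $g=1$. By the neck-stretching Proposition~\ref{etirement cou}, for $R$ large each of the (oddly many, hence at least one) discs counted above lies in $F(E_{r_0+\epsilon})$ and so avoids $\Delta$; projecting by $\pi$ therefore gives a genuine $J_\Sigma$-holomorphic disc $u=\pi\circ\tilde u:(D^2,S^1)\to(\Sigma,L)$ with $\mu_L(u)=\mu_{\Gamma_L}(\tilde u)=2$ (again Proposition~\ref{passage de maslov}) and $[\partial u]=g=1$. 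Having contractible boundary, the class $[u]$ lies in the kernel of $\partial\colon\pi_2(\Sigma,L)\to\pi_1(L)$, which by the long exact sequence of the pair is the image of $\pi_2(\Sigma)$; realising this by capping $\partial u$ off with a disc in $L$ turns $u$ into a sphere $S\in\pi_2(\Sigma)$ with $2c_1^\Sigma(S)=\mu_L(u)=2$, i.e. $c_1^\Sigma(S)=1$. As $u$ is non-constant, $S$ is non-trivial, so this contradicts $N_\Sigma\geqslant 2$. Hence no Maslov-$2$ disc with trivial boundary class exists, the count for such a class would be $0$ rather than odd, and therefore $g\neq 1$.

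Finally I would descend the centraliser statement. Proposition~\ref{prop:Mihai Gamma reciproque} applied to $\Gamma_L$ shows that $C_{\pi_1(\Gamma_L)}(\tilde g)$ has finite index. Since the bundle is trivial, $\pi_1(\Gamma_L)\cong\pi_1(L)\times\mathbb{Z}$ with the fibre class spanning the central $\mathbb{Z}$-factor; writing $\tilde g=(g,m)$ one computes $C_{\pi_1(\Gamma_L)}(\tilde g)=C_{\pi_1(L)}(g)\times\mathbb{Z}$, of index $[\pi_1(L):C_{\pi_1(L)}(g)]$. Finiteness of the former index thus forces the latter, so $C_{\pi_1(L)}(g)$ has finite index in $\pi_1(L)$, and with $g\neq 1$ this is the required element. (Alternatively, one could transport the whole odd count to $(\Sigma,L)$ via the lifting Proposition~\ref{relevement disques} and apply Proposition~\ref{prop:Mihai Gamma reciproque} directly to $L$.) I expect the genuinely delicate step to be the non-triviality of $g$ in the previous paragraph: Damian's theorem controls $\tilde g$ only modulo the central fibre class, and a purely fibrewise $\tilde g$ would make the descended centraliser statement vacuous, so the geometric input of neck-stretching combined with $N_\Sigma\geqslant 2$ is exactly what excludes it.
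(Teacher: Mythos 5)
Your argument is correct, and its first half coincides with the paper's own proof: triviality of the bundle gives $\tilde{\Gamma}_{L}\cong\tilde{L}\times\mathbb{R}$, hence the vanishing of odd-degree cohomology needed to run Theorem~\ref{Th=0000E9or=0000E8me Damian gamma} on $\Gamma_{L}$, and $N_{L}=N_{\Gamma_{L}}=2$ via Proposition~\ref{passage de maslov}. Where you genuinely depart from the paper is the descent to $\pi_{1}\left(L\right)$. The paper transports the odd disc count itself down to $\left(\Sigma,L\right)$, using the full one-to-one correspondence of Section~\ref{sec:Proof} (neck-stretching to project, Proposition~\ref{relevement disques} together with the boundary-class computation to lift), and then applies Proposition~\ref{prop:Mihai Gamma reciproque} directly to $L$. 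You instead apply Proposition~\ref{prop:Mihai Gamma reciproque} upstairs to $\Gamma_{L}$ and push the finite-index property down through $\pi_{1}\left(\Gamma_{L}\right)\cong\pi_{1}\left(L\right)\times\mathbb{Z}$, a decomposition available precisely because the bundle is trivial; the geometry (Proposition~\ref{etirement cou}, projection, capping, $N_{\Sigma}\geqslant2$) enters only to rule out $g=\pi_{*}\tilde{g}=1$. Your route is more elementary for this particular proposition, since it bypasses the lifting and its uniqueness argument entirely (and it would even generalise: surjectivity of $\pi_{*}$ alone maps a finite-index centraliser into one). The paper's route, on the other hand, is the one reused for Theorem~\ref{prop:Main-result}, and it yields the stronger, Fukaya-type conclusion that $g$ itself bounds Maslov-$2$ $J_{\Sigma}$-holomorphic discs in $\left(\Sigma,L\right)$. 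A real merit of your write-up is that you isolate and prove the non-triviality of $g=\pi_{*}\tilde{g}$, a point both routes require --- Proposition~\ref{prop:Mihai Gamma reciproque} gives a vacuous statement for a purely fibrewise $\tilde{g}$ --- and which the paper's proof passes over in silence; your capping argument (a Maslov-$2$ disc with contractible boundary produces a spherical class of Chern number $1$, impossible when $N_{\Sigma}\geqslant2$) is exactly the right one. One imprecision to fix: Damian's theorem must be invoked with $J=J_{R}$ for a fixed $R>R_{0}$ (which is regular by the regularity discussion in Section~\ref{sec:Proof}), not with a generic $J_{W}$, since Proposition~\ref{etirement cou} constrains only $J_{R}$-holomorphic discs; as written, the discs you count in your first paragraph are not literally the ones your second paragraph projects.
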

\begin{proof}
If $\Gamma_{L}$ is a trivial bundle, then $\tilde{L}$ is a retraction
of $\tilde{\Gamma}_{L}=\tilde{L}\times\mathbb{R}$, and the cohomology
of $\tilde{\Gamma}_{L}$ is exactly the same as $\tilde{L}$. In particular,
the odd-numbered cohomology groups of $\tilde{\Gamma}_{L}$ vanish.
The hypothesis of theorem \ref{Th=0000E9or=0000E8me Damian gamma}
are now valid on $\Gamma_{L}$, hence the result is obtained there.
The one-to-one correspondence built in section~\ref{sec:Proof} between
the pseudo-holomorphic discs on $\left(\Sigma,L\right)$ and those
on $\left(W,\Gamma_{L}\right)$ with corresponding boundary and Maslov
index $2$ still exists, as we did not use any assumption on our Lagrangian
topology in its proof. Therefore, we can apply proposition~\ref{prop:Mihai Gamma reciproque}
to $L$.\end{proof}
\begin{lem}
If $H^{2}\left(L,\mathbb{Z}\right)=0$ or $H^{2}\left(\Sigma,\mathbb{Z}\right)$
is generated by $\left[\omega_{\Sigma}\right]$ then $\Gamma_{L}$
is trivial. It is in particular the case for $\Sigma=\mathbb{C}P^{n}$.\end{lem}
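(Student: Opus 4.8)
The plan is to reduce triviality of $\Gamma_{L}$ to the vanishing of a single cohomology class. Recall from subsection~\ref{sub:Lagrangian-circle-bundle} that $\Gamma_{L}=\pi_{r_{0}}^{-1}(L)$ is the restriction over $L$ of the oriented circle bundle $P_{r_{0}}\to\Sigma$, which is nothing but the unit circle bundle of the Hermitian line bundle $\mathcal{N}$ (of first Chern class $[\tau]$). Writing $j:L\hookrightarrow\Sigma$ for the inclusion, $\Gamma_{L}$ is therefore the unit circle bundle of $j^{*}\mathcal{N}$, so that as an oriented $S^{1}$-bundle it is classified by its Euler class $e(\Gamma_{L})=c_{1}(j^{*}\mathcal{N})=j^{*}[\tau]\in H^{2}(L,\mathbb{Z})$. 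Since an oriented circle bundle is trivial exactly when its Euler class vanishes, the entire statement comes down to checking that $j^{*}[\tau]=0$ in each of the two hypotheses.

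The first case is immediate: if $H^{2}(L,\mathbb{Z})=0$ there is no class to carry, so $e(\Gamma_{L})=0$ and $\Gamma_{L}$ is trivial. For the second case I would exploit the Lagrangian condition. By hypothesis $[\tau]=k[\omega_{\Sigma}]$ is an integer multiple of the generator $[\omega_{\Sigma}]$ of $H^{2}(\Sigma,\mathbb{Z})$, whence $e(\Gamma_{L})=k\,j^{*}[\omega_{\Sigma}]$. Because $L$ is Lagrangian, $j^{*}\omega_{\Sigma}=0$ already at the level of differential forms, so $j^{*}[\omega_{\Sigma}]=0$ in the de Rham group $H^{2}(L,\mathbb{R})$. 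Thus $e(\Gamma_{L})$ sits in the kernel of the coefficient map $H^{2}(L,\mathbb{Z})\to H^{2}(L,\mathbb{R})$, i.e. it is a torsion class.

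The main (and genuinely only delicate) obstacle is precisely the passage from this torsion statement to actual vanishing: the Lagrangian condition kills $j^{*}[\omega_{\Sigma}]$ only in real coefficients, and a nonzero torsion Euler class would still give a nontrivial bundle. This upgrade is automatic as soon as $H^{2}(L,\mathbb{Z})$ is torsion-free, which is exactly the situation in all the applications of the corollaries — there the relevant $L$ have $H^{2}(L,\mathbb{Z})$ equal to $0$ or to $\mathbb{Z}$, so a torsion class must vanish. Finally, for $\Sigma=\mathbb{C}P^{n}$ one has $H^{2}(\mathbb{C}P^{n},\mathbb{Z})\cong\mathbb{Z}$ generated by $[\omega]$, so the second case applies directly and $\Gamma_{L}$ is trivial, as claimed.
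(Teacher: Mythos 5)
Your argument follows the same route as the paper's own proof: identify $e\left(\Gamma_{L}\right)$ with $\iota^{*}e_{P}=k\,\iota^{*}\left[\omega_{\Sigma}\right]$ by naturality of the Euler class, then invoke the Lagrangian condition $\iota^{*}\omega_{\Sigma}=0$. The difference is that you stopped at precisely the point the paper glosses over, and you were right to: the Lagrangian condition is a statement about differential forms, so it only shows that $e\left(\Gamma_{L}\right)$ dies in $H^{2}\left(L;\mathbb{R}\right)$, i.e.\ that it is a torsion class. The paper's proof concludes $e\left(\Gamma_{L}\right)=0$ outright from this, and that step is a genuine gap, not a formality. Concretely, take $\Sigma=\mathbb{C}P^{3}\subset M=\mathbb{C}P^{4}$ and $L=\mathbb{R}P^{3}$, a compact, orientable, monotone Lagrangian, with $H^{2}\left(\Sigma;\mathbb{Z}\right)$ generated by $\left[\omega_{\Sigma}\right]$. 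Here $\mathcal{N}=\mathcal{O}(1)$, and $\mathcal{O}(-1)_{\big|\mathbb{R}P^{3}}$ is the complexification of the real tautological line bundle, so $e\left(\Gamma_{L}\right)=\pm\beta\left(a\right)$, where $a$ generates $H^{1}\left(\mathbb{R}P^{3};\mathbb{Z}/2\right)$ and $\beta$ is the Bockstein. Since the mod-$2$ reduction of $\beta\left(a\right)$ is $a^{2}\neq0$, the Euler class is the non-zero element of $H^{2}\left(\mathbb{R}P^{3};\mathbb{Z}\right)\cong\mathbb{Z}/2$, and $\Gamma_{L}$ is non-trivial: its total space is the mapping torus of the antipodal map of $S^{3}$, with fundamental group $\mathbb{Z}$, whereas $S^{1}\times\mathbb{R}P^{3}$ has fundamental group $\mathbb{Z}\times\mathbb{Z}/2$. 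So the lemma, read as an isolated statement about Lagrangians, is false, and the torsion issue you isolated is exactly where it fails.

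Your patch --- requiring $H^{2}\left(L;\mathbb{Z}\right)$ to be torsion-free --- is the correct fix, but your claim that this is ``exactly the situation in all the applications'' deserves more care. It does rescue Corollary~\ref{cor:application S1xS2k CP2k+1}: there $H^{2}\left(L;\mathbb{Z}\right)$ is $0$ (for $k\geqslant2$) or $\mathbb{Z}^{p}$ (for $k=1$), hence torsion-free. Theorem~\ref{prop:Main-result} is also unaffected, since for a $K\left(\pi,1\right)$ Lagrangian the bundle $\Gamma_{L}$ is automatically aspherical and Damian's theorem applies to it with no triviality needed. However, the second alternative of Theorem~\ref{thm:main result version CPn} and case~2 of Corollary~\ref{cor:corollaire prinicpal} place no restriction on the torsion of $H^{2}\left(L;\mathbb{Z}\right)$ (the hypothesis that the odd-numbered cohomology of $\tilde{L}$ vanishes excludes $\mathbb{R}P^{3}$ itself, but nothing in the proof shows it forbids torsion in $H^{2}$ in general), so as stated they inherit the gap; they should be amended by adding torsion-freeness of $H^{2}\left(L;\mathbb{Z}\right)$, or equivalently by requiring that $\iota^{*}\left[\omega_{\Sigma}\right]$ vanish in \emph{integral} cohomology. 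In short: your proof is the paper's proof done carefully, and the extra care turns out to be necessary.
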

\begin{proof}
In both cases we compute the Euler class $e_{\Gamma_{L}}$; if $H^{2}\left(L,\mathbb{Z}\right)=0$
then it is trivially zero. In the other case, let us denote by $P$
the circle bundle over $\Sigma$ of same radius as $\Gamma_{L}$,
such that $\Gamma_{L}=\iota^{*}P$ where $\iota:L\hookrightarrow\Sigma$
is the inclusion. By naturality of the Euler class, $e_{\Gamma_{L}}=\iota^{*}e_{P}$,
but $e_{P}$ is collinear to $\left[\omega_{\Sigma}\right]$, and
$\iota^{*}\omega_{\Sigma}=0$ since $L$ is a Lagrangian submanifold.
\end{proof}
The combination of those two points implies the theorem~\ref{thm:main result version CPn}.

\subsection{Connected sums}
\begin{lem}
\label{lem:free product}Let $\left(G_{i}\right)_{i\in I}$ be a finite
collection of groups, at least one being infinite and another non-trivial.
Let $g\in *_{i\in I}G_{i}\setminus\left\{ e\right\} $, where
$e$ refers to the identity. Then its centraliser $Z\left(g\right)$
is not of finite index.\end{lem}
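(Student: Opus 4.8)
The plan is to reformulate finiteness of the index in terms of conjugacy classes and then to exploit the hypotheses through a single structural consequence. Writing $G = *_{i\in I}G_i$, the conjugation action gives $[G:Z(g)] = \#\{x g x^{-1} : x\in G\}$, so $Z(g)$ has finite index if and only if $g$ has a finite conjugacy class; it therefore suffices to show that every $g\neq e$ has infinitely many conjugates. The two hypotheses — that some factor $G_{i_0}$ is infinite and some other factor $G_{i_1}$ is non-trivial — will only be used via the fact that the subgroup $G_{i_0} * G_{i_1}\leq G$ contains a free group of rank two (a standard ping-pong statement, valid since $\#G_{i_0}\geq 3$ and $\#G_{i_1}\geq 2$, with no finiteness assumption needed). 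In particular $G$ is \emph{not} virtually cyclic, since virtually cyclic groups contain no non-abelian free subgroup.

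I would then invoke the normal form in a free product, equivalently the action on the Bass--Serre tree, to split into the two usual types of element. \emph{First}, suppose $g$ is conjugate into a factor; since conjugation does not change the index of the centraliser I may assume $g\in G_j\setminus\{e\}$. Here one uses the classical fact that in a free product the centraliser of a non-trivial element of a factor is contained in that factor, whence $Z(g) = Z_{G_j}(g)\subseteq G_j$. It then suffices to check that $G_j$ has infinite index. As $g\neq e$ the factor $G_j$ is non-trivial, and the complementary free product $H = *_{i\neq j}G_i$ is non-trivial as well, because at least one of $G_{i_0},G_{i_1}$ has index different from $j$ and is non-trivial. Writing $G = G_j * H$ with both factors non-trivial and fixing $a\in G_j\setminus\{e\}$ and $h\in H\setminus\{e\}$, the cosets $G_j(ah)^n$ for $n\geq 0$ are pairwise distinct, because $(ah)^k$ is a reduced word of length $2|k|$ for $k\neq 0$ and hence lies outside $G_j$; thus $[G:G_j]=\infty$ and a fortiori $[G:Z(g)]=\infty$.

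\emph{Second}, suppose $g$ is not conjugate into any factor, i.e. it is conjugate to a cyclically reduced word of syllable length at least two. Such a $g$ acts on the Bass--Serre tree as a hyperbolic isometry with a well-defined axis, its centraliser preserves that axis, and the induced action forces $Z(g)$ to be virtually cyclic (in fact infinite cyclic, as any torsion element of $Z(g)$ would have to fix the axis pointwise, which is impossible in a free product). Were $[G:Z(g)]$ finite, $G$ would be virtually cyclic, contradicting the first paragraph; hence $[G:Z(g)]=\infty$ in this case too, and the lemma follows. I expect the main obstacle to be the careful statement and citation of the two structural inputs — that a factor element has its centraliser inside the factor, and that a hyperbolic element has virtually cyclic centraliser — both of which are standard outputs of Bass--Serre theory but must be invoked precisely; once these are in place the index computations are routine.
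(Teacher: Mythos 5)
Your proof is correct, but it takes a genuinely different route from the paper's in the decisive case, so let me compare. Both arguments split according to whether $g$ is (conjugate to) a factor element; in that first case both rest on the fact that the centraliser of a non-trivial element of $G_j$ lies in $G_j$ --- the paper proves this by hand as a normal-form sub-lemma, you cite it as standard --- and then a routine coset count finishes. The real divergence is the second case. For $g$ of syllable length at least two, the paper stays entirely elementary: its sub-lemma gives $Z(g)\cap G_1=\{e\}$, so any non-repeating sequence $\left(h_n\right)$ in the \emph{infinite} factor $G_1$ already produces infinitely many distinct cosets $h_n Z(g)$; no Bass--Serre theory, no free subgroups, no virtual cyclicity. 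You instead identify $Z(g)$ as the centraliser of a hyperbolic isometry of the Bass--Serre tree, conclude it is (virtually) cyclic, and rule out finite index because $G$ contains a rank-two free subgroup by ping-pong and hence is not virtually cyclic. Your parenthetical justification via torsion is slightly loose --- the clean statement is that every element of $Z(g)$ must translate along the axis (a reflection would conjugate $g$ to an inverse-direction translation), and the kernel of this translation action is trivial because edge stabilisers in the Bass--Serre tree of a free product are trivial --- but nothing essential is missing, and indeed virtual cyclicity alone suffices for your contradiction. What your heavier machinery buys is generality: infinitude of $G_{i_0}$ enters your argument only through $\#G_{i_0}\geqslant 3$, so you actually prove the stronger statement that the conclusion holds whenever one factor has at least three elements and another at least two, which excludes exactly the case $\mathbb{Z}/2\mathbb{Z}*\mathbb{Z}/2\mathbb{Z}$ that the paper's subsequent remark exhibits as a genuine exception; the paper's coset count, by contrast, truly needs $G_1$ infinite in the second case. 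What the paper's approach buys is self-containedness: it needs no input beyond the uniqueness of reduced words, whereas your proof must invoke (and, to be airtight, precisely state or cite) the centraliser-in-factor fact, the ping-pong free subgroup, and the structure of centralisers of hyperbolic tree isometries.
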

\begin{proof}
To simplify the notations we will assume that $I=\left\{ 1,2\right\} $
with $G_{1}$ infinite and $G_{2}$ non-trivial. It is clear that
any element in $\left(G_{1}*G_{2}\right)\setminus\left\{ e\right\} $
can be uniquely written as a product of non-trivial elements of $G_{1}$
and $G_{2}$ alternately. This fact is the basis of a nice sub-lemma:
\begin{lem}
Let $y\in G_{i}\setminus\left\{ e\right\} $ and $x\in\left(G_{1}*G_{2}\right)\setminus G_{i}$.
Then $x$ and $y$ do not commute.\end{lem}
\begin{proof}
If $x$ is in the other group than $y$, obviously they do not commute.
Let us then write $x=\prod\limits _{k=1}^{n}x_{k}$ as a product of
non-trivial elements of $G_{1}$ and $G_{2}$ alternately. Since $x$
is in neither $G_{1}$ nor $G_{2}$, $k>1$. Therefore, if we write
$xy$ in the same fashion, its leftmost factor is still $x_{1}$,
even after simplification. Besides, $x_{2}$ is in the other group:
to have $yx=xy$ we therefore need $x_{1}$ to be in $G_{1}\setminus\left\{ e,y^{-1}\right\} $.
But now the leftmost factor of $yx$ is $\left(yx_{1}\right)$, which
must be equal to the leftmost factor of $xy$, that is to say $x_{1}$.
Since $y\neq e$, this is impossible.
\end{proof}
We can now use this result two ways: first, assume that $g\in G_{i}$.
Then we have that its centraliser lie in $G_{i}$. We can pick some
$h=h_{1}h_{2}$, where each $h_{i}$ is some non-trivial element of
$G_{i}$. Then for $n\in\mathbb{N}$, each $h^{n}Z\left(g\right)$
is distinct, otherwise it would imply that $h^{k}\in Z\left(g\right)$
for some $k\in\mathbb{N}$.

Now if $g\in\left(G_{1}*G_{2}\right)\setminus\left(G_{1}\cup G_{2}\right)$,
we know that $Z\left(g\right)\cap G_{i}=\left\{ e\right\} $ for $i=1,2$.
In particular we see that taking a non-repeating sequence $\left(h_{n}\right)_{n\in\mathbb{N}}$
in $G_{1}$ gives infinitely many distinct classes $h_{n}Z\left(g\right)$.

In either case, the index of $Z\left(g\right)$ is infinite.\end{proof}
\begin{rem}
On the other hand, it is reasonably straightforward to check that
in $\nicefrac{\mathbb{Z}}{2\mathbb{Z}}*\nicefrac{\mathbb{Z}}{2\mathbb{Z}}=\left\langle u,v\mid u^{2},v^{2}\right\rangle $,
$Z\left(uv\right)=\left\langle uv\right\rangle =\left\{ \left(uv\right)^{k},k\in\mathbb{Z}\right\} $
is of finite index.
\end{rem}

Combining this lemma~\ref{lem:free product} with the our main result
as stated in the theorem~\ref{thm:main result version CPn}, we obtain
this corollary: 
\begin{cor}
Let $L$ be a compact, orientable manifold such that all the odd-numbered
cohomology groups of its universal cover $\tilde{L}$ vanish. Assume
that its fundamental group is the free product of a non-trivial group
and an infinite group, and either:
\begin{enumerate}
\item $\forall i\in I,H^{2}\left(L_{i},\mathbb{Z}\right)=0$ or
\item $H^{2}\left(\Sigma,\mathbb{Z}\right)$ is generated by $\left[\omega_{\Sigma}\right]$.
\end{enumerate}
Then $L$ cannot be embedded in $\Sigma$ as a monotone Lagrangian
submanifold.\end{cor}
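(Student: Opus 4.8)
The plan is to argue by contradiction, combining the topological constraint of Theorem~\ref{thm:main result version CPn} with the purely group-theoretic obstruction of Lemma~\ref{lem:free product}. Suppose that $L$ admits a monotone Lagrangian embedding into $\Sigma$. I first observe that $L$ then satisfies every hypothesis of Theorem~\ref{thm:main result version CPn}: it is compact and orientable by assumption, the odd-numbered cohomology groups of its universal cover $\tilde{L}$ vanish, and monotonicity is guaranteed by the very embedding we are assuming for contradiction. The remaining hypothesis of the theorem is exactly the dichotomy offered in the statement: in case~$(1)$ we have $H^{2}\left(L,\mathbb{Z}\right)=0$, while in case~$(2)$ the group $H^{2}\left(\Sigma,\mathbb{Z}\right)$ is generated by $\left[\omega_{\Sigma}\right]$. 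In either case the theorem applies.

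Applying Theorem~\ref{thm:main result version CPn}, I obtain a non-trivial element $g\in\pi_{1}\left(L\right)$ whose centraliser is of finite index. On the other hand, the hypothesis on the fundamental group says precisely that $\pi_{1}\left(L\right)$ is isomorphic to a free product of an infinite group and a non-trivial group, which is exactly the situation covered by Lemma~\ref{lem:free product}. That lemma asserts that \emph{every} non-trivial element of such a free product has a centraliser of infinite index. This directly contradicts the conclusion of the theorem, so no monotone Lagrangian embedding $L\hookrightarrow\Sigma$ can exist.

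Since all of the analytic and geometric content has already been assembled in the earlier sections, the only genuine work here is the bookkeeping that matches the hypotheses stated for this corollary with those of Theorem~\ref{thm:main result version CPn}; there is no serious obstacle. The one point worth verifying carefully is that the fundamental-group hypothesis is precisely the configuration to which Lemma~\ref{lem:free product} applies, so that its conclusion — centralisers of infinite index — is genuinely available and stands in clear contradiction with the finite-index element produced by the main theorem.
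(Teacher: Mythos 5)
Your proof is correct and is exactly the argument the paper intends: the paper gives no explicit proof, merely stating that the corollary follows by ``combining'' Lemma~\ref{lem:free product} with Theorem~\ref{thm:main result version CPn}, which is precisely your contradiction argument. You also correctly read the statement's hypothesis~$(1)$ (a leftover ``$\forall i\in I$'' from the connected-sum setting) as $H^{2}\left(L,\mathbb{Z}\right)=0$, which is the intended meaning.
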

\begin{rem}
Let $G_{1}$ and $G_{2}$ be two non-trivial groups. Then for $i\in\left\{ 1,2\right\} $,
there exists some non-trivial $g_{i}\in G_{i}$, and $\left\{ \left(g_{1}g_{2}\right)^{n},\allowbreak n\in\mathbb{N}\right\} $
clearly is an infinite subset of $G_{1}*G_{2}$.

Hence, it suffices for the fundamental group to be the free product
of three non-trivial groups.
\end{rem}

We now prove corollary~\ref{cor:corollaire prinicpal}: 
\begin{cor*}
Let $\left(L_{i}\right)_{i\in I}$ be a finite collection of compact,
orientable, $2k+1$-di\-men\-sio\-nal manifolds such that all the
odd-numbered cohomology groups of each universal cover $\tilde{L}_{i}$
vanish. Assume that either:
\begin{enumerate}
\item $\forall i\in I,H^{2}\left(L_{i},\mathbb{Z}\right)=0$ with $k>1$,
or
\item $H^{2}\left(\Sigma,\mathbb{Z}\right)$ is generated by $\left[\omega_{\Sigma}\right]$.
\end{enumerate}
Then there is no Lagrangian monotone embedding of the connected sum
$\sharp_{i\in I}L_{i}$ in $\Sigma$.\end{cor*}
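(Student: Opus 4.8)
The plan is to argue by contradiction. Suppose $L:=\sharp_{i\in I}L_{i}$ (for a genuine connected sum we may assume $\left|I\right|\geqslant2$) admits a monotone Lagrangian embedding in $\Sigma$. I will check that $L$ meets the hypotheses of theorem~\ref{thm:main result version CPn}, which then produces a non-trivial $g\in\pi_{1}\left(L\right)$ whose centraliser is of finite index; this I will contradict using the free-product structure of $\pi_{1}\left(L\right)$ together with lemma~\ref{lem:free product}. Concretely, three things must be verified: that $\pi_{1}\left(L\right)$ is a free product with a suitable infinite factor, that the odd-numbered cohomology of the universal cover $\tilde{L}$ vanishes, and that the relevant $H^{2}$-hypothesis holds.

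Two of these are routine. Since each $L_{i}$ has dimension $2k+1\geqslant3$, van Kampen's theorem gives $\pi_{1}\left(L\right)\cong *_{i\in I}\pi_{1}\left(L_{i}\right)$. I claim each factor is infinite: if some $\pi_{1}\left(L_{i}\right)$ were finite, then $\tilde{L}_{i}$ would be a closed, connected, orientable manifold of the odd dimension $2k+1$, so Poincaré duality would yield $H^{2k+1}\left(\tilde{L}_{i};\mathbb{Z}\right)\cong H_{0}\left(\tilde{L}_{i};\mathbb{Z}\right)=\mathbb{Z}\neq0$, contradicting the vanishing of the odd cohomology of $\tilde{L}_{i}$. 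Hence $*_{i\in I}\pi_{1}\left(L_{i}\right)$ is a free product of at least two infinite groups, so lemma~\ref{lem:free product} applies. As for the $H^{2}$-condition: in case~(2) the hypothesis that $H^{2}\left(\Sigma,\mathbb{Z}\right)$ is generated by $\left[\omega_{\Sigma}\right]$ is exactly one of the two alternatives required by theorem~\ref{thm:main result version CPn}; in case~(1), since $k>1$ ensures $2\leqslant\left(2k+1\right)-2$, the Mayer--Vietoris sequence of the connected sum splits in degree two, giving $H^{2}\left(L;\mathbb{Z}\right)\cong\bigoplus_{i\in I}H^{2}\left(L_{i};\mathbb{Z}\right)=0$, the other alternative. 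This is precisely where the restriction $k>1$ enters case~(1), as for $k=1$ the connecting sphere $S^{2k}$ contributes to $H^{2}$ and the splitting fails.

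The remaining and principal point is the vanishing of the odd cohomology of $\tilde{L}$. Here I would use the Bass--Serre description of $\tilde{L}$ as a tree of spaces over the tree $T$ of the free product: a vertex of type $i$ carries a copy of the universal cover of $L_{i}\setminus B$ (that is, $\tilde{L}_{i}$ with the full preimage of a ball removed), and each edge carries a copy of $S^{2k}$, which is simply connected since $2k\geqslant2$ and so equals its own universal cover. First I would show the vertex spaces have vanishing odd cohomology: writing $\tilde{L}_{i}$ as the union of a vertex space and the removed balls, the Mayer--Vietoris sequence — combined with $H^{\mathrm{odd}}\left(\tilde{L}_{i}\right)=0$ and the fact that $H^{*}\left(S^{2k}\right)$ is concentrated in the even degrees $0$ and $2k$ — squeezes every odd cohomology group of the vertex space to zero; the same sequence moreover shows, using $H^{2k+1}\left(\tilde{L}_{i}\right)=0$, that the degree-$2k$ restriction map from a vertex space to the product of its boundary spheres is surjective. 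I would then run the Mayer--Vietoris spectral sequence of the tree of spaces. As all vertex and edge spaces have vanishing odd cohomology, the only conceivably non-zero odd group is $H^{2k+1}\left(\tilde{L}\right)$, identified with the cokernel of the difference-of-restrictions map $d_{1}\colon\prod_{v}H^{2k}(\text{vertex}_{v})\to\prod_{e}H^{2k}\left(S^{2k}\right)$. Because $T$ has no cycles and each per-vertex restriction is onto, one solves for a preimage edge by edge after rooting $T$; thus $d_{1}$ is surjective and $H^{2k+1}\left(\tilde{L}\right)=0$.

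With all hypotheses in place, theorem~\ref{thm:main result version CPn} furnishes a non-trivial $g\in\pi_{1}\left(L\right)=*_{i\in I}\pi_{1}\left(L_{i}\right)$ with finite-index centraliser, whereas lemma~\ref{lem:free product} asserts that every non-trivial element of this free product has centraliser of infinite index; this contradiction rules out the embedding and proves the corollary. I expect the main obstacle to be the universal-cover computation of the preceding paragraph: one must set up the tree-of-spaces decomposition carefully, keep track of the infinitely many boundary spheres glued to each vertex space, and in particular secure the degree-$2k$ surjectivity that forces $H^{2k+1}\left(\tilde{L}\right)=0$. By comparison, the van Kampen and low-degree Mayer--Vietoris inputs are straightforward.
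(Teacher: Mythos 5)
Your proposal is correct and follows essentially the same route as the paper: argue by contradiction via Theorem~\ref{thm:main result version CPn} together with Lemma~\ref{lem:free product}, showing each $\pi_{1}\left(L_{i}\right)$ is infinite because a finite fundamental group would make the universal cover a closed orientable $\left(2k+1\right)$-manifold with non-vanishing top cohomology, and transporting the cohomological hypotheses to the connected sum by Mayer--Vietoris. Your tree-of-spaces computation of $H^{\mathrm{odd}}\left(\tilde{L}\right)$ is just the detailed form of the step the paper compresses into ``using the Mayer--Vietoris sequence, it is easy to see,'' so the two proofs coincide in substance.
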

\begin{proof}
Since our $\left(L_{i}\right)_{i\in I}$ are compact manifolds, so
are their universal covers whenever the fundamental group is finite.
Yet their $2k+1$-cohomology groups vanish, so it is impossible.

Then, using the Mayer-Vietoris sequence, it is easy to see that the
odd-numbered cohomology groups of each universal cover vanish also
for the connected sum $\sharp_{i\in I}L_{i}$. The same reasoning
shows that the assumption $1$ is stable through connected sums.
\end{proof}

Corollary~\ref{cor:application S1xS2k CP2k+1} is then a straightforward
application.

\vfill
\setlength{\parindent}{0cm}
Simon Schatz\\
IRMA, UMR 7501\\
7, rue René-Descartes\\
67084 Strasbourg Cedex\\
France

\newpage{}

\bibliographystyle{alpha}
\nocite{*}
\bibliography{articlebis}

\end{document}